\DeclareMathOperator*{\argmin}{arg\,min}
\numberwithin{equation}{section}
\def\parent#1{#1^{-1}}
\def\r{\varrho}
\newcommand{\eqa}{\begin{eqnarray}}
\newcommand{\ena}{\end{eqnarray}}
\newcommand{\eq}{\begin{equation}}
\newcommand{\en}{\end{equation}}
\newcommand{\eqs}{\begin{eqnarray*}}
\newcommand{\ens}{\end{eqnarray*}}
\def\X{\mathbf{X}} 
\def\n{\nu} 
\def\r{\varrho} 
\newcommand{\Z}     {\mathbb{Z}} 
\newcommand{\N}     {\mathbb{N}}
\newcommand{\Go}     {{\scriptsize GORW}}
\newcommand{\Or}     {{\scriptsize ORRW}}
\newcommand{\Er}     {{\scriptsize ERRW}}
\newcommand{\Rw}     {{\scriptsize RWRE}}
\def\1{{\mathchoice {1\mskip-4mu\mathrm l}      
{1\mskip-4mu\mathrm l} 
{1\mskip-4.5mu\mathrm l} {1\mskip-5mu\mathrm l}}} 
\newcommand{\ssup}[1] {{{\scriptscriptstyle{({#1}})}}} 
\def\comment#1{} 
\newcommand{\Ccal}   {{\mathcal C }}
\newcommand{\Gcal}   {{\mathcal G }}
\def\ignore#1{}
\def\Def{\ :=\ }
\def\C{\mathcal{C}}
\def\bP{\mathbf{P}}
\def\parent#1{#1^{-1}}
\newtheorem{theorem}{Theorem}
\newtheorem{proposition}[theorem]{Proposition}
\newtheorem{lemma}[theorem]{Lemma}
\newtheorem{remark}[theorem]{Remark}
\newtheorem{definition}[theorem]{Definition}
\newtheorem{corollary}[theorem]{Corollary}
\newtheorem*{ack}{Acknowledgement}
\renewcommand{\epsilon}{\varepsilon}
\title[The branching-ruin number]{The branching-ruin number and the critical parameter of once-reinforced random walk on trees
 }
\date{}
\author[A.~Collevecchio]{Andrea Collevecchio}
\address{Andrea Collevecchio\\ School of Mathematical Sciences, Monash  University, Melbourne} \email{andrea.Collevecchio@monash.edu}
\author[D.~Kious]{Daniel Kious}
\address{Daniel Kious\\ NYU-ECNU Institute of Mathematical Sciences at NYU Shanghai} \email{daniel.kious@nyu.edu}
\author[V.~Sidoravicius]{Vladas Sidoravicius}
\address{Vladas Sidoravicius\\ Courant Institute of Mathematical Sciences, New York, and NYU-ECNU Institute of Mathematical Sciences at NYU Shanghai} \email{vs1138@nyu.edu}
\keywords{Self-interacting random walks, Once-reinforced random walk, recurrence, transience, branching number, branching-ruin number}
\begin{document}

\begin{abstract}
The motivation for this paper
 is the study of the phase transition for recurrence/transience of a class of self-interacting random walks on trees, which includes the once-reinforced random walk.
For this purpose, we define a  quantity, that we call the {\it branching-ruin number} of a tree, which provides (in the spirit of Furstenberg \cite{Furs} and  Lyons \cite{L90}) a natural way to measure trees with polynomial growth. We prove that the branching-ruin number of a tree is equal to the critical parameter for the recurrence/transience of the once-reinforced random walk. We define a sharp and effective (i.e.~computable) criterion characterizing the recurrence/transience of a larger class of self-interacting walks on trees, providing the complete picture for their phase transition.
\end{abstract}

\maketitle

In this paper we  study  the phase transition for recurrence/transience of a class of self-interacting random walks on trees.
Our main tool is a   quantity, that we call the {\it branching-ruin number} of a tree, which provides a natural way to measure trees with polynomial growth. In particular, we prove that the branching-ruin number of a tree is equal to the critical parameter for the recurrence/transience of the once-reinforced random walk ({\scriptsize ORRW}) on this tree, providing the complete picture of its phase transition.
The last statement is a corollary of a more general study of a larger class of self-interacting random walks, for which we prove a sharp  and effective (i.e.~computable) criterion characterizing their recurrence or transience. This class of processes includes a generalization of the {\scriptsize ORRW}, as well as biased random walks, or random walks in random environment, see Remark \ref{remrwre}.

The study of self-interacting random walks is challenging, as they are not Markovian, and proving recurrence or transience is difficult.
Our approach provides  the first general technique for the study of  \Or.\\

The idea of the branching-ruin number stems  both from the Hausdorff dimension of a tree defined  by  Furstenberg \cite{Furs} and from the branching number introduced by Lyons \cite{L90} who linked it to biased random walks, percolation and Ising model on trees. In \cite{LP}, Lyons and Peres write ``{\it the branching number of a tree is a single number that captures enough of the complexity of a general tree to give the critical value for a stochastic process on the tree}''. The branching-ruin number aims at fulfilling the same mission, but for a different class of random walks and trees. The branching number is adapted to the study of trees with exponential growth. The branching-ruin number is designed for the study of trees with polynomial growth (see Section \ref{features}) and is strikingly related to the critical parameter of the \Or.\\

The \Or~was introduced in 1990 by Davis \cite{Dav90}. Despite its  simple definition, the \Or~turns out to be difficult to analyze and, so far, no general tools were available for its study. The last author conjectured that on $\mathbb{Z}^d$, $d\ge3$, the {\scriptsize ORRW} undergoes a phase transition  recurrence/transience with respect to the reinforcement parameter. 
This problem is still open on the hypercubic lattice. In the two-dimensional case, recurrence on $\mathbb{Z}^2$ remains unsolved. 

Durrett, Kesten and Limic \cite{DKL02} proved that this conjecture does not hold on the binary tree and that {\scriptsize ORRW}  is transient for any choice of parameter. This was extended to supercritical Galton-Watson trees in \cite{Coll} (see also \cite{CHK} where  the positivity and the monotonicity of the speed on Galton-Watson trees is studied). Some partial results on ladders \cite{Sellke,Vervoort} are also available.  

Recently, the authors in \cite{KS16} provided  the first example of phase transition for  {\scriptsize ORRW} on $\mathbb{Z}^d$-like trees. It should be noted that these trees were spherically symmetric  with  a particular structure.

We should mention that a similar phase transition  was conjectured  for linearly edge-reinforced random walks (\Er) on $\mathbb{Z}^d$ in the eighties \cite{CD}, and was  first proved on regular trees in \cite{Pemtree}. Only recently, the phase transition recurrence/transience on $\mathbb{Z}^d$ , $d\ge3$, was established  in \cite{ST,ACK,DST}, see also \cite{SZ}.   However, techniques developed for  \Er~do not apply to \Or, in particular because exchangeability does not hold.

Here, we treat the case of  general trees. In particular, we recover and generalize any known result about \Or~by computing the branching-ruin number of the trees in these contexts, see Theorem \ref{corpoly}, Corollary \ref{corsuper} and Remark \ref{remzdlike}.
Besides, the sharp criterion in Theorem \ref{mainth} is stronger than existing results in the sense that it allows inhomogeneous initial weights and inhomogeneous reinforcement.\\
Finally, the main idea of our proof of transience relies on the presence of an infinite cluster for  a particular correlated percolation.

\section{The model}\label{sectionmodel}
\subsection{Notation} Let  $\Gcal=(V,E)$ be an infinite, locally finite, rooted tree with set of vertices $V$ and set of edges $E$. Let $\r$ be the root of $\mathcal{G}$.\\
For any vertex $\nu \in V\setminus\{{\r}\}$, denote  by $ {\nu}^{-1}$ its parent, i.e.  the neighbour of $\nu$ with shortest distance from $\r$.
For any $\nu \in V$, let  $|\nu|$ be  the number of edges in the unique self-avoiding path connecting $\nu$ to~$\r$ and call $|\nu|$ the {\it generation} of $\nu$. In particular, we have  $|\r|=0$.  For any edge $e \in E$
denote by  $e^-$ and $e^+$ its endpoints with $|e^+|=|e^-|+1$, and define the generation of an edge as $|e|=|e^+|$.\\
Two vertices $\nu,\mu\in V$ are called {\it neighbors}, denoted $\nu\sim\mu$, if they are the endpoints of a given edge $e$, that is $\{\mu,\nu\}=\{e^-,e^+\}$.\\
For any pair of  vertices $\nu$ and $\mu$, we write $\nu \le \mu$ if $\nu$ is on the unique self-avoiding path between ${\r}$ and $\mu$ (including it), and $\nu< \mu$ if moreover $\nu \neq \mu$. Similarly, {for two edges $e$ and $g$, we write $g\le e$ if $g^+ \le e^+ $ and $g<e$ if moreover $g^+\neq e^+$}. {For two vertices $\nu<\mu\in V$, we will denote by $[\nu,\mu]$  the unique self-avoiding path connecting $\nu$ to $\mu$.} For two neighboring vertices $\nu$ and $\mu$, we use the slight abuse of notation $[\nu,\mu]$ to denote the edge with endpoints $\nu$ and $\mu$ (note that we allow $\mu<\nu$).\\
 For two edges $e_1,e_2\in E$, we denote $e_1\wedge e_2$ the vertex with maximal distance from $\r$ such that $e_1\wedge e_2\le e_1^+$ and $e_1\wedge e_2\le e_2^+$.\\

\subsection{Definition of the model}\label{secdefmodel}
We define a generalized version of the Once-reinforced random walks, that we  denote by  \Go. This  process, denoted by $\X=(X_n)_n$, is discrete-time  and  takes values on the vertices of the tree $\Gcal$.  It starts from $\r$,  i.e. $X_0=\r$.  At each step, it jumps to one of the neighbors of the present state, according to the  rule described below. To any edge $e\in E$, we associate an {\it initial weight} $w_e\in(0,\infty)$ and a {\it reinforced weight} $\delta_e\in(0,\infty)$. 
Any edge is assigned its initial weight as long as it has not been crossed. After an edge is crossed for the first time,  it is assigned its reinforced weight from this time on (hence the weight of an edge is updated at most {\it once} in its whole life).
At each stage the walk   jumps through an edge with a probability that is proportional to its current weight.\\
More formally, let  $E_n$ be the collection of  edges crossed up to time $n$,  that is
\begin{align}\label{defEn}
E_n:=\left\{e\in E: \exists  k\in \{1,\dots,n\} \text{ s.t.~}\{X_{k-1},X_k\}=\{e^-,e^+\}\right\}.
\end{align}
At time $n\in\mathbb{N}$ and on the event $\left\{X_n=\nu \right\}$ with $\nu\in V$,  the walk jumps  to a neighbor $\mu\sim \nu$ with conditional probability
\[
\bP\left(\left.X_{n+1}=\mu\right|\mathcal{F}_n\right)=\frac{\delta_{[\nu,\mu]}\1_{[\nu,\mu]\in E_n}+w_{[\nu,\mu]}\1_{[\nu,\mu]\notin E_n}}{\sum_{\mu':\mu'\sim \nu}\left(\delta_{[\nu,\mu']}\1_{[\nu,\mu']\in E_n}+w_{[\nu,\mu']}\1_{[\nu,\mu']\notin E_n}\right)},
\]
where $\left(\mathcal{F}_n\right)$ is the natural filtration generated by the history of $\X$, i.e.~$\mathcal{F}_n=\sigma(X_k,0\le k\le n)$ for any integer $n\ge0$. The case when $w_e=1$ and $\delta_e=\delta$ for any $e\in E$ and for some $\delta\in(0,\infty)$ corresponds to the Once edge-reinforced random walk ({\scriptsize ORRW}) with parameter $\delta$. Note that the model we defined includes usual reversible Markov chains on trees, as well as various generalized versions of the {\scriptsize ORRW} (see  \cite{CHK} for instance).\\

A \Go~is said to be {\it recurrent} if, $\bP$-a.s.,  it eventually returns to ${\r}$. This process is {\it transient} if it is not recurrent, i.e.
$$ \bP\Big(T({\r}) =\infty\Big) >0,$$
where, for a vertex $v\in V$, $T(v)$ stands for the {\it return time} to $v$, that is
\[
T(v) \Def \inf\{n>0:X_n=v\}.
\]
In Section \ref{dichotomy}, we  prove a 0-1 law implying the equivalence between transience (resp.~recurrence) and the fact that \Go~visits each vertex finitely (resp.~infinitely) often almost surely.

\section{Main results}
\subsection{The branching-ruin number and the {\scriptsize ORRW}}
Let us fix an infinite, locally finite, rooted tree $\mathcal{G}$. Our first goal is to define the {\it branching-ruin number} of  $\mathcal{G}$.\\
We will need the notion of {\it cutsets}. A cutset  is a set $\pi$ of edges such that, for any infinite self-avoiding path $(\nu_i)_{i\ge0}$ started at the root,  there exists a unique $i\ge0$ such that $[\nu_{i-1},\nu_i]\in \pi$. In other words, a cutset is a minimal set of edges separating the root from infinity. We use $\Pi$ to denote the set of cutsets.\\
The branching-ruin number of the tree $\mathcal{G}$ is defined as
 \begin{align}\label{defBRNS}
br_r(\mathcal{G})= \sup\left\{\lambda>0: \inf_{\pi\in \Pi}\sum_{e\in\pi}|e|^{-\lambda}>0\right\}.
\end{align}
The branching-ruin number is  intrinsic to the  tree and is defined for any tree. Nevertheless, this quantity is particularly interesting when  measuring trees with polynomial growth. We give explanations and motivations for this fact in Section \ref{features}. It is an effective quantity in the sense that, in most cases, we can compute its value for a given tree. It is worth noting that, under some assumptions such as spherical symmetry (i.e.~the vertices within the same generation have the same number of children), a tree whose generation sizes grow like $n^b$ has a branching-ruin number equal to $b$. Also, a tree with subpolynomial growth has a branching-ruin number equal to $0$ and a spherically symmetric tree with exponential growth has an infinite branching-ruin number (see Corollary \ref{corsuper}).\\
Strikingly, the branching-ruin number of a tree is equal to the critical parameter for the recurrence/transience of the {\scriptsize ORRW} on this tree.\\

Recall that a random walk  ${\bf X}$  is {\scriptsize ORRW} with {\it reinforced parameter} $\delta\in(0,\infty)$ if it is a \Go, defined in Section \ref{secdefmodel}, with initial weights $w_e=1$ and reinforced weights $\delta_e=\delta$ for any edge $e$ of the tree.

The following theorem provides the full picture about  recurrence/transience of the {\scriptsize ORRW} on trees and identifies the value of the critical parameter.

\begin{theorem}\label{corpoly}
Fix an infinite, locally finite, tree $\mathcal{G}$ and let $br_r(\mathcal{G})\in[0,\infty]$ be its branching-ruin number. 
The {\scriptsize ORRW} with reinforced parameter $\delta\in(0,\infty)$ is transient if $\delta<br_r(\mathcal{G})$ and  recurrent if $\delta>br_r(\mathcal{G})$.
\end{theorem}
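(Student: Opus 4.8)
The plan is to derive Theorem \ref{corpoly} as a special case of the general sharp criterion (Theorem \ref{mainth}, stated later in the paper), specializing the inhomogeneous weights to $w_e=1$ and $\delta_e=\delta$. The whole task then reduces to showing that, for this homogeneous choice, the criterion that distinguishes recurrence from transience coincides exactly with the comparison between $\delta$ and $br_r(\mathcal{G})$. So the first step is to unwind what the general criterion says when all initial weights are $1$ and all reinforced weights equal $\delta$: one expects the relevant series on a cutset $\pi$ to take the form $\sum_{e\in\pi}\prod_{g\le e}(\text{some ratio depending on }\delta\text{ and }|g|)$, and for homogeneous weights the product telescopes into something comparable to $|e|^{-c(\delta)}$ for an explicit exponent $c(\delta)$. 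Identifying $c(\delta)$ is the crux: I would compute the effective conductance (or the relevant Radon–Nikodym–type weight) of the path $[\rho,e^+]$ under the ORRW dynamics and check that, after a vertex has been visited, the biasing toward deeper generations scales like $1/(\delta\cdot\text{generation})$, producing $|e|^{-1}$ per unit of $\delta$ in the exponent — i.e. $c(\delta)$ should be such that $c(\delta)<1\iff\delta<br_r$ after matching with \eqref{defBRNS}.

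Concretely, I would proceed as follows. (i) State Theorem \ref{corpoly} as a corollary of Theorem \ref{mainth}: plug in the homogeneous weights and simplify the effective criterion. (ii) Show that the cutset sum appearing in Theorem \ref{mainth} is, up to multiplicative constants depending only on $\delta$, equal to $\sum_{e\in\pi}|e|^{-\lambda(\delta)}$ for a suitable $\lambda(\delta)$; this is the computation where the telescoping of the homogeneous products happens, and I would be careful with the first generation (where the root behaves slightly differently) and with the locally-finite-but-unbounded-degree case, absorbing bounded-per-vertex factors into constants. (iii) Invoke the definition \eqref{defBRNS}: $br_r(\mathcal{G})=\sup\{\lambda:\inf_{\pi}\sum_{e\in\pi}|e|^{-\lambda}>0\}$, so the general criterion "the cutset infimum is positive" becomes "$\lambda(\delta)<br_r(\mathcal{G})$", and "the cutset infimum is zero" becomes "$\lambda(\delta)>br_r(\mathcal{G})$". (iv) Finally verify that the map $\delta\mapsto\lambda(\delta)$ is a strictly monotone bijection of $(0,\infty)$ onto $(0,\infty)$ with $\lambda(\delta)<br_r\iff\delta<br_r$ — in fact I expect the normalization to come out so that $\lambda(\delta)=\delta$ exactly, making the identification immediate, but even if it is only monotone the conclusion follows after transporting the inequality through $\lambda$.

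I would also need to handle the degenerate endpoints cleanly: if $br_r(\mathcal{G})=0$ then every $\delta>0$ falls in the recurrent regime, and if $br_r(\mathcal{G})=\infty$ then every $\delta>0$ is transient; both are consistent with Theorem \ref{mainth} provided the cutset sums are interpreted correctly (for $br_r=\infty$, spherically symmetric exponential growth is the model case, handled via Corollary \ref{corsuper}). No claim is made at the critical value $\delta=br_r(\mathcal{G})$, so nothing needs to be proved there.

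The main obstacle I anticipate is step (ii): showing that the genuinely path-dependent weighting produced by the ORRW reinforcement collapses, for homogeneous parameters, to a clean power $|e|^{-\lambda(\delta)}$ with the right exponent and with error terms that are uniformly bounded over all cutsets and all trees. The reinforcement means the "conductance" of an edge depends on the trajectory, so the reduction to a fixed power law requires controlling these fluctuations — this is precisely the kind of estimate that Theorem \ref{mainth} is built to provide, so most of the work is in correctly reading off its hypotheses in the homogeneous case rather than in reproving them. Everything after that is bookkeeping with the definition of $br_r$.
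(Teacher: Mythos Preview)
Your proposal is correct and follows essentially the same route as the paper: specialize Theorem~\ref{mainth} to $w_e=1$, $\delta_e=\delta$, compute $\psi(e)=(|e|-1)/(|e|-1+\delta)$ explicitly, show $\Psi(e)^\lambda\asymp |e|^{-\lambda\delta}$ uniformly over cutsets, and read off $RT(\mathcal{G},\X)=br_r(\mathcal{G})/\delta$. One clarification: your anticipated obstacle in step~(ii) is not a real difficulty, because $\Psi(e)$ is a \emph{deterministic} function of the weights along $[\varrho,e^+]$ (see \eqref{defpsi}--\eqref{defPsi}), not a trajectory-dependent quantity, so the reduction is a short calculus estimate rather than a stochastic control problem; also remember to note that condition~\eqref{maincond} holds trivially here with ratio $1/\delta$.
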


\begin{remark}\label{remzdlike}
In \cite{KS16}, two of the authors studied the {\scriptsize ORRW} on  $\mathbb{Z}^d$-like trees $\mathbb{T}_d$ whose vertices have $d$ children if they are at some generation $2^k$, $k\in\mathbb{N}$, and only one child otherwise. One can easily compute that these trees  have a branching-ruin number $br_r(\mathbb{T}_d)=\log_2(d)$ and thus recover the result from \cite{KS16} using Theorem \ref{corpoly}.
\end{remark}

In some situations, we are able to describe the behavior at criticality. The next result is proved in Section \ref{sectcrit}.

\begin{proposition}\label{propcrit}
Fix an infinite, locally finite tree $\mathcal{G}$ and consider the  {\scriptsize ORRW} $\X$ with critical parameter  $\delta_c=br_r(\mathcal{G})\in(0,\infty)$. 
First, if
\[
\inf_{\pi \in\Pi} \sum_{e\in \pi}{ |e|^{-\delta_c}}=0,
\]
then $\X$ is recurrent. Second, if there exists a positive function $f$ such that
\[
\inf_{\pi \in\Pi} \sum_{e\in \pi}\frac{1}{ |e|^{\delta_c}f(|e|)}>0\text{ and } \sum_{n\ge1}\frac{1}{nf(n)}<\infty,
\]
then $\X$ is transient.
\end{proposition}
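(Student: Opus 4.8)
The plan is to prove Proposition \ref{propcrit} by adapting the recurrence/transience machinery used to establish Theorem \ref{corpoly} — that is, the general sharp criterion of Theorem \ref{mainth} — to the critical case $\delta=\delta_c=br_r(\mathcal{G})$, where the bare cutset sum $\inf_\pi \sum_{e\in\pi}|e|^{-\delta_c}$ may be either $0$ or strictly positive. I expect the underlying criterion to compare, for a \Go{} (here \Or{} with parameter $\delta_c$), the process's recurrence to the finiteness/positivity of cutset sums of the form $\sum_{e\in\pi} \prod_{g\le e} (\text{something like } \delta_g/(w_g+\delta_g \cdot(\text{degree factor})))$; for \Or{} on a tree with $|e|$ encoding the position, this reduces to a sum governed by $|e|^{-\delta}$ up to constants. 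The two halves of the proposition then correspond to the two sides of this criterion.

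For the \textbf{recurrence} part, I would argue directly with the Nash-Williams-type / electrical-network heuristic that is presumably embedded in the proof of Theorem \ref{mainth}: since the \Or{} dominated by (or comparable to) a reversible walk with conductances $\asymp |e|^{-\delta_c}$ along a ray has effective conductance from the root to infinity bounded by $\inf_{\pi}\sum_{e\in\pi}|e|^{-\delta_c}$ (Nash--Williams), the hypothesis that this infimum is $0$ forces the effective conductance to be $0$, hence recurrence. The one subtlety is that \Or{} is not reversible, so rather than literally invoking Nash--Williams I would reuse whatever comparison lemma from the main body makes the ``recurrent side'' of Theorem \ref{mainth} work (the bound on return probabilities / the coupling with a biased-walk estimate), feeding in the cutset sum $=0$ to conclude. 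This direction should be the easier one.

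For the \textbf{transience} part, the idea is that the given function $f$ provides exactly the ``slowly varying correction'' needed to push the critical walk onto the transient side of the sharp criterion: if $\inf_\pi \sum_{e\in\pi} 1/(|e|^{\delta_c} f(|e|)) > 0$ with $\sum_n 1/(n f(n)) < \infty$, then one can run the same percolation / first-moment-second-moment construction used to prove transience in Theorem \ref{mainth}, but with the survival parameters along a ray chosen proportional to $1/(|e|^{\delta_c} f(|e|))$ instead of $1/|e|^{\delta_c}$. The summability $\sum_n 1/(nf(n)) < \infty$ is what guarantees that the expected number of backtracking excursions across a given level, summed over all levels, is finite — so a Borel--Cantelli argument gives that the walk escapes along some branch of the surviving percolation cluster. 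Concretely, one replaces the exponent bound $\delta_c$ by the sequence $\delta_c + \log f(|e|)/\log|e|$, shows the correlated percolation associated to these parameters still has an infinite cluster because $\inf_\pi \sum_{e\in\pi}\prod_{g\le e}(\cdots) > 0$, and then deduces transience exactly as in the supercritical ($\delta<br_r$) case.

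The \textbf{main obstacle} I anticipate is the transience half: one must check that the percolation construction from the proof of Theorem \ref{mainth} is robust enough to accept position-dependent survival probabilities of the form $c/(|e|^{\delta_c} f(|e|))$ — in particular that the second-moment/correlation estimates controlling the cluster of ``good'' edges only use the cutset positivity and the summability $\sum 1/(nf(n))<\infty$, and not the pure power law. If the original argument is written only for the homogeneous weights $w_e=1,\ \delta_e=\delta$, some care is needed to verify that replacing $\delta$ by a generation-dependent effective exponent keeps all the estimates valid; this is where a careful bookkeeping of constants along each ray, together with the hypothesis $\sum_n 1/(nf(n))<\infty$ to sum the per-level error terms, does the work. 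The recurrence half, by contrast, should follow quickly from the Nash--Williams-type lower bound on hitting probabilities already available in the paper.
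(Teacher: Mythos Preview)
Your recurrence half is fine and matches the paper: the hypothesis $\inf_\pi\sum_{e\in\pi}|e|^{-\delta_c}=0$ combined with the estimate $\Psi(e)\asymp |e|^{-\delta_c}$ (from the proof of Theorem~\ref{corpoly}) feeds directly into Proposition~\ref{proprec}, which is a first-moment/Borel--Cantelli argument rather than Nash--Williams, but you already anticipated reusing the paper's recurrence lemma.

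The transience half has a genuine gap. You propose to \emph{modify the percolation (or model) parameters}, replacing $\delta_c$ by a generation-dependent effective exponent $\delta_c+\log f(|e|)/\log|e|$, and then rerun the second-moment/percolation machinery. But in the paper's framework the percolation $\mathcal{C}_{\mathrm{CP}}(\varrho)$ and the quantities $\psi(e),\Psi(e),c(e)$ are \emph{determined by the \Or{} model with parameter $\delta_c$}; they are not free to tune. What \emph{is} free is the choice of the capacities $u_e$ in the max-flow min-cut step (Proposition~\ref{propLyons}). The paper's proof simply takes
\[
u_e=\frac{1}{|e|\,f(|e|)}.
\]
Since for critical \Or{} one has $c(e)\asymp |e|^{1-\delta_c}$, this choice gives $u_e c(e)\asymp |e|^{-\delta_c}/f(|e|)$, so the cutset condition $\inf_\pi\sum_{e\in\pi}u_e c(e)>0$ is exactly the first hypothesis, while the energy bound $\max_{|e|=n}\sum_{g\le e}u_g\le \sum_{k\ge1}1/(kf(k))<\infty$ is exactly the second. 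Then Lemma~\ref{simple} and Proposition~\ref{proplowerboundperco} (which apply to the \emph{unchanged} critical \Or{}, using only condition~\eqref{maincond}) yield transience. No generation-dependent model, no reworking of quasi-independence, and no Borel--Cantelli on the transient side are needed; your anticipated ``main obstacle'' of checking robustness of the second-moment estimates under modified survival probabilities is an artifact of having put the function $f$ in the wrong place.
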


In the light of the last result, one can easily show for instance that on a spherically symmetric tree that grows like $n^a/\log(n)$, the critical {\scriptsize ORRW} is recurrent, whereas if the tree grows like $n^a \log^2(n)$ then it is transient.\\

As mentioned in the introduction, the branching-ruin number is related to the branching number of the tree, studied by R.~Lyons \cite{L90} and defined as
\begin{align}\label{defbranch}
br(\mathcal{G})\Def \sup\left\{\lambda>0: \inf_{\pi\in \Pi}\sum_{e\in\pi}\lambda^{-|e|}>0\right\}.
\end{align}
Let us recall that any regular tree and any supercritical Galton-Watson tree, on the event of non-extinction,  has a branching number a.s.~equal to its mean offspring and thus strictly larger than $1$. Therefore, the following simple consequence of Theorem \ref{corpoly} generalizes  results of    Durrett, Kesten and Limic \cite{DKL02} and  results in   \cite{Coll}.
\begin{corollary}\label{corsuper}
Consider \Or~with parameter $\delta$ defined on a tree $\mathcal{G}$ which satisfies $br(\mathcal{G})>1$, where $br(\mathcal{G})$ is the branching number defined in \eqref{defbranch}.
This process  is transient for any $\delta\in(0,\infty)$.
 \end{corollary}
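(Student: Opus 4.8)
The plan is to deduce this directly from Theorem \ref{corpoly} by showing that the hypothesis $br(\mathcal{G})>1$ forces the branching-ruin number $br_r(\mathcal{G})$ to be $+\infty$; once this is established, the claim is immediate since for every $\delta\in(0,\infty)$ we then have $\delta<br_r(\mathcal{G})=\infty$, hence transience of the \Or~by Theorem \ref{corpoly}. So the entire content is the implication $br(\mathcal{G})>1\Longrightarrow br_r(\mathcal{G})=\infty$, which I would prove by comparing the two cutset sums appearing in \eqref{defBRNS} and \eqref{defbranch}.

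The key step is the following elementary comparison. Fix $\lambda>1$ with $\lambda<br(\mathcal{G})$, so that $\inf_{\pi\in\Pi}\sum_{e\in\pi}\lambda^{-|e|}>0$. I claim that for \emph{every} $\alpha>0$ one has $\inf_{\pi\in\Pi}\sum_{e\in\pi}|e|^{-\alpha}>0$ as well, which by definition \eqref{defBRNS} gives $br_r(\mathcal{G})\ge\alpha$, and since $\alpha$ is arbitrary, $br_r(\mathcal{G})=\infty$. To see the claim, note that the map $n\mapsto \lambda^{-n}/n^{-\alpha}=n^{\alpha}\lambda^{-n}$ tends to $0$ as $n\to\infty$ (because $\lambda>1$), so there is a constant $C=C(\lambda,\alpha)<\infty$ with $\lambda^{-n}\le C\,n^{-\alpha}$ for all $n\ge1$. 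Consequently, for any cutset $\pi$,
\[
\sum_{e\in\pi}|e|^{-\alpha}\ \ge\ \frac1C\sum_{e\in\pi}\lambda^{-|e|}\ \ge\ \frac1C\,\inf_{\pi'\in\Pi}\sum_{e\in\pi'}\lambda^{-|e|}\ >\ 0,
\]
and taking the infimum over $\pi\in\Pi$ on the left preserves the strict positivity. This proves $br_r(\mathcal{G})=\infty$.

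Finally I would invoke the stated facts about branching numbers: any regular tree, and any supercritical Galton–Watson tree on the event of non-extinction, has branching number a.s.\ equal to its mean offspring number, which is $>1$; hence the hypothesis $br(\mathcal{G})>1$ covers all these cases, and the corollary recovers and generalizes the results of \cite{DKL02} and \cite{Coll}. I do not anticipate a genuine obstacle here — the only mild subtlety is to be sure that passing to the infimum over cutsets on both sides of the pointwise inequality is legitimate, which it is since the inequality $\sum_{e\in\pi}|e|^{-\alpha}\ge \frac1C\inf_{\pi'}\sum_{e\in\pi'}\lambda^{-|e|}$ holds uniformly in $\pi$. The whole argument is short; the real work was done in Theorem \ref{corpoly}.
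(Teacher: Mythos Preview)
Your proof is correct and uses essentially the same comparison as the paper: exponential decay $\lambda^{-|e|}$ is dominated by any polynomial decay $|e|^{-\alpha}$, so positivity of the branching-number cutset sums forces positivity of the branching-ruin cutset sums. The only cosmetic difference is that you package this as $br_r(\mathcal{G})=\infty$ and invoke Theorem~\ref{corpoly}, whereas the paper writes the same inequality at the level of $\sum_{e\in\pi}(\Psi(e))^{\lambda}$ and invokes Theorem~\ref{mainth} directly; since Theorem~\ref{corpoly} is itself derived from Theorem~\ref{mainth} via $RT(\mathcal{G},\X)=br_r(\mathcal{G})/\delta$, the two arguments are the same chain of implications with a different intermediate stopping point.
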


In Section \ref{features}, we present other interesting examples of trees with polynomial growth and compute their branching-ruin numbers.

\subsection{The sharp criterion}
Let us now state our most general result, which is a sharp and effective criterion for the recurrence/transience of  \Go, deeply related to the branching-ruin number.\\

Let us now consider  \Go~${\bf X}$, defined as in Section \ref{secdefmodel}, with initial weights $(w_e)_{e\in E}$ and reinforced weights $(\delta_e)_{e\in E}$.
  For any edge $e\in E$, define
\begin{align}\label{defpsi}
 \psi(e) = \frac{\displaystyle{\sum_{g\in E: g< e} \delta_g^{-1}}}{w_e^{-1}+\displaystyle{\sum_{g\in E: g< e} \delta_g^{-1}}},
 \end{align}
with the convention that $\psi(e)=1$ if the sum in the numerator is empty, i.e.~if $|e|=1$.
{Note that, roughly speaking, $\psi(e)$ corresponds to the probability that the \Go~{\it restricted} to the path from the root to $e^+$ hits $e^+$ before  returning to the root, after having reached $e^-$. This interpretation in terms of one-dimensional ruin probabilities will be made rigorous at the end of Section \ref{sec:ext}.}\\
 Finally, let us define, for any $e\in E$,
 \begin{align}\label{defPsi}
 \Psi(e)=\prod_{g\le e} \psi(g).
 \end{align}
Recall that we defined, just before \eqref{defBRNS}, the set  $\Pi$ of all cutsets of the tree $\mathcal{G}$. In the statement of the next theorem, we will assume that the following technical condition on ${\bf X}$ holds:
\begin{align}\label{maincond}
\exists M\in(1,\infty)\text{ s.t.~}\frac{1}{M}\le\frac{\sum_{g\le e}1/\delta_g}{\sum_{g\le e}1/w_g}\le M,\text{ for all }e\in E.
\end{align}
The recurrence or transience of ${\bf X}$ on the tree $\mathcal{G}$ is going to be characterized by the quantity
\begin{align}\label{defBR}
RT(\mathcal{G},\X)\Def \sup\left\{\lambda>0: \inf_{\pi\in \Pi}\sum_{e\in\pi}(\Psi(e))^{\lambda}>0\right\}.
\end{align}
One can easily check using \eqref{defBRNS} that the branching-ruin number $br_r(\mathcal{G})$ of $\mathcal{G}$ is equal to $RT(\mathcal{G},{\bf S})$ where ${\bf S}$ is the simple random walk (i.e.~$w_e=\delta_e=1$ for every edge $e$). Therefore, the quantity $RT(\cdot,\cdot)$ can be seen as a generalized version of the branching-ruin number.\\
The next result provides a sharp and effective criterion for recurrence/transience of \Go s, under the condition \eqref{maincond}.

\begin{theorem}\label{mainth}  Consider a  \Go~${\bf X}$ defined on an infinite,  locally finite, tree $\mathcal{G}$. If $RT(\mathcal{G},\X)<1$ then $\mathbf{X}$ is recurrent. If $RT(\mathcal{G},\X)>1$ and if \eqref{maincond} is satisfied then $\mathbf{X}$ is transient.
\end{theorem}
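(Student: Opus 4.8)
The plan is to treat the two halves separately, using the classical electrical-network / cutset technology that underlies the analogous statements for the branching number (cf.\ Lyons--Peres), but adapted to the non-Markovian setting via a comparison with one-dimensional ruin estimates.

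For the recurrence direction ($RT(\mathcal{G},\mathbf{X})<1$), the key idea is to bound the probability of ever reaching generation $n$, or more precisely of crossing a given cutset $\pi$, from above by something like $\sum_{e\in\pi}\Psi(e)$ (up to constants). Concretely, I would first observe that $\Psi(e)$ is an upper bound for the probability that $\mathbf{X}$, started from $\mathfrak{r}$, ever reaches $e^+$ \emph{before returning to} $\mathfrak{r}$, along the unique path $[\mathfrak{r},e^+]$: since the walk restricted to a ray behaves like a (reinforced, hence after one crossing time-homogeneous) birth-and-death chain, the probability of escaping the root along that ray to $e^+$ is controlled by the product of the per-edge ruin probabilities $\psi(g)$, $g\le e$. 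The subtlety is that excursions off the ray can only \emph{slow down} escape in a tree (each excursion into a subtree returns to the ray before the walk can make progress toward $e^+$), so a union bound over $e\in\pi$ gives $\mathbf{P}(T(\mathfrak{r})=\infty)\le \mathbf{P}(\mathbf{X}\text{ crosses }\pi\text{ without returning to }\mathfrak r)\le C\inf_{\pi\in\Pi}\sum_{e\in\pi}\Psi(e)$; but $RT(\mathcal{G},\mathbf{X})<1$ means the supremum in \eqref{defBR} is below $1$, so taking $\lambda=1$ the infimum over $\pi$ of $\sum_{e\in\pi}\Psi(e)^{1}$ is $0$, forcing $\mathbf{P}(T(\mathfrak r)=\infty)=0$, i.e.\ recurrence. (One then invokes the $0$--$1$ law from Section \ref{dichotomy} to get the clean statement.) No condition like \eqref{maincond} is needed here.

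For the transience direction ($RT(\mathcal{G},\mathbf{X})>1$ plus \eqref{maincond}), the natural route is a second-moment / percolation argument, which is why the paper's introduction flags that ``transience relies on the presence of an infinite cluster for a particular correlated percolation.'' Since $RT(\mathcal{G},\mathbf{X})>1$, there is some $\lambda>1$ with $\inf_{\pi}\sum_{e\in\pi}\Psi(e)^{\lambda}>0$; by the Max-Flow--Min-Cut theorem (in the cutset formulation, as in Lyons--Peres), this produces a unit flow $\theta$ from the root to infinity with $\theta(e)\le C\,\Psi(e)^{\lambda}$ for all $e$. The idea is then to build, on the same tree, a correlated percolation in which an edge $e$ is ``open'' roughly when the walk performs a successful escape across $e$ in an appropriate sense, and to use the flow together with \eqref{maincond} to show this percolation has an infinite open cluster with positive probability. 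Condition \eqref{maincond} enters precisely to give two-sided control: it says $\sum_{g\le e}1/\delta_g \asymp \sum_{g\le e}1/w_g$, which allows one to compare the reinforced escape probabilities to the ``initial-weight'' ones uniformly, so that the success events across different edges are not merely controlled in expectation but are sufficiently independent (decorrelate along disjoint subtrees) for a Borel--Cantelli / second-moment argument along the flow to produce an infinite ray the walk escapes along. Transience of $\mathbf{X}$ then follows from the existence of such an infinite open cluster.

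The main obstacle, as usual with self-interacting walks, is the transience direction: one must quantify the correlations in the percolation model built from the walk's escape events and show they decay fast enough along disjoint subtrees for the flow bound $\theta(e)\le C\Psi(e)^\lambda$ (with $\lambda>1$ giving the needed summability) to translate into a positive-probability infinite cluster. I expect this to require a careful inductive/generational construction: condition on the walk's behavior up to the first visit to generation $n$, show that conditionally on reaching a vertex $v$ the walk has a uniformly positive (in terms of $\Psi$) chance of escaping deeper into $v$'s subtree, and control the dependence between sibling subtrees using the tree structure (the walk must return to $v$ before switching subtrees) together with \eqref{maincond}. The recurrence direction, by contrast, should be essentially a clean union bound once the one-dimensional ruin interpretation of $\psi$ and $\Psi$ is set up rigorously (the ``end of Section \ref{sec:ext}'' referenced in the text).
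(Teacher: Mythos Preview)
Your recurrence argument is essentially the paper's: a union bound over a cutset with $\bP(T(e^+)<T(\r))\le\Psi(e)$, the inequality being justified by comparison to the walk restricted to the ray $[\r,e^+]$. The paper formalizes your intuition ``excursions off the ray can only slow down escape'' through the \emph{extensions} of Section~\ref{sec:ext} (Rubin's construction): the restriction $\X^{(e)}$ is coupled to $\X$ so that every on-ray step of $\X$ is a step of $\X^{(e)}$, whence $\{T(e^+)<T(\r)\}\subset\{T^{(e)}(e^+)<T^{(e)}(\r)\}$ and the latter has probability exactly $\Psi(e)$ by \eqref{eqhit}. No constant $C$ is needed, and the $0$--$1$ law is not invoked for this direction.

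For transience your high-level plan---Max-Flow--Min-Cut producing a flow dominated by $\Psi(e)^\lambda$ with $\lambda>1$, then a second-moment/percolation argument---is exactly the paper's route. The gap is in how you propose to control correlations. Your ``inductive/generational construction: condition on the walk's behavior up to the first visit to generation $n$\ldots'' is not what the paper does, and would be painful precisely because the self-interaction couples the conditional law to the entire past trajectory. The paper's key device is again Rubin's continuous-time embedding: assign i.i.d.\ mean-one exponential clocks to directed edge-crossings and build $\X$ and all its path-restrictions from the \emph{same} clock family. One then declares $e$ open iff the extension $\X^{(e)}$ on $[\r,e^+]$ reaches $e^+$ before $\r$; this percolation has $\bP(e\text{ open})=\Psi(e)$ exactly and, crucially, the joint event $\{e_1,e_2\text{ open}\}$ factors as a comparison of three \emph{independent} continuous random variables $L(e),L^*(e_1),L^*(e_2)$ (clock-time consumed on the common segment vs.\ each branch). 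Using $x_1\vee x_2\ge(x_1+x_2)/2$ one obtains \emph{quasi-independence} (Lemma~\ref{lemmaquasi}),
\[
\bP(e_1,e_2\text{ open}\mid e_1\wedge e_2\text{ open})\le C_Q\prod_{i=1,2}\bP(e_i\text{ open}\mid e_1\wedge e_2\text{ open}),
\]
which is exactly the hypothesis of the weighted second-moment bound (Lyons--Peres, Theorem~5.19). That bound converts positive effective conductance for the network with $c(e)=\Psi(e)/(1-\psi(e))$ into $\bP(|\mathcal C_{\mathrm{CP}}(\r)|=\infty)>0$, hence transience. The finite-energy flow is built from $u_e=(1-\psi(e))\Psi(e)^{\lambda-1}$, and the energy bound reduces to the analytic Proposition~\ref{propfunc}.

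Finally, your description of what \eqref{maincond} buys is off. It is not used to ``compare reinforced escape probabilities to initial-weight ones''; it enters only inside the quasi-independence computation. After the $x_1\vee x_2\ge(x_1+x_2)/2$ split one has effectively doubled $\sum_{g\le e}\delta_g^{-1}$ on the common segment, and must compare the resulting modified ruin probabilities $\widetilde\psi(g)$ to $\psi(g)$ along each branch. Condition~\eqref{maincond} is precisely what makes the product $\prod_{e<g\le e_i}\widetilde\psi(g)/\psi(g)$ bounded by a universal constant via a telescoping argument; the paper remarks that without quasi-independence the conclusion of Theorem~\ref{mainth} is believed to fail, so this is not a step you can expect to bypass.
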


Let us comment condition \eqref{maincond}. First, \eqref{maincond} is satisfied by any multiplicative {\scriptsize ORRW} with general initial weights, i.e. $w_e\in(0,\infty)$ and $\delta_e=\delta \times w_e$ for any $e\in E$ and for some parameter $\delta\in(0,\infty)$, in which case the ratio in \eqref{maincond} is always equal to $\delta$. This includes the case of Markov chains, by choosing $\delta=1$. Note that \eqref{maincond} allows for more inhomogeneity than these cases.\\
Second, it should be noted that, in fact, this condition is essentially necessary if one wants to follow the strategy we adopt here. Indeed, it is not too difficult to find a counterexample to  Lemma \ref{lemmaquasi} when  \eqref{maincond} does not hold. Here, we choose to give \eqref{maincond} as condition, because it is easy to check for any model, but it should be noted that  Theorem~\ref{mainth} still holds if we replace \eqref{maincond} by {\it quasi-independence} as described in Lemma \ref{lemmaquasi}. Besides, we believe that  Theorem \ref{mainth} fails without assuming quasi-independence.\\

In most cases the quantity $RT(\mathcal{G},\X)$ can be explicitly computed.  Let us consider a general example.
Fix  a tree $\mathcal{G}$ such that $br(\mathcal{G})>1$. A  process $\X$ is a biased {\scriptsize ORRW} with parameter $\delta\in(0,\infty)$ if it is a \Go~with initial weights $w_e=\beta^{-|e|}$ and reinforced weights $\delta_e=\delta\times\beta^{-|e|}$ for every edge $e\in E$. The case $\beta>1$ corresponds to a bias towards the root and the case $\beta\in(0,1)$ corresponds to an outward bias. The next result generalizes Corollary 1.5 in \cite{CHK}. Note that the case $\delta=1$ corresponds to a usual biased random walk, and the case $\beta=1$ corresponds to \Or.

\begin{corollary}\label{corbiased}
Let  ${\bf X}$ be  a biased  {\scriptsize ORRW}  as described above on a tree $\mathcal{G}$ with $br(\mathcal{G})>1$.  First, if $\beta\in(0,1]$, then $RT(\mathcal{G},{\bf X})=\infty$ and thus ${\bf X}$ is transient for any parameter $\delta>0$.
Second, if $\beta>1$, we have that 
\[
RT(\mathcal{G},{\bf X})=\frac{\ln\left(br(\mathcal{G})\right)}{\ln\left(\delta(\beta-1)+1\right)}.
\]
In particular, ${\bf X}$ is transient if $\delta<(br(\mathcal{G})-1)/(\beta-1)$ and it is recurrent if $\delta>(br(\mathcal{G})-1)/(\beta-1)$.
\end{corollary}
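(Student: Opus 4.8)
The plan is to deduce Corollary~\ref{corbiased} directly from Theorem~\ref{mainth}, which amounts to two computations: first checking that condition \eqref{maincond} holds for biased {\scriptsize ORRW}, so that the transience half of Theorem~\ref{mainth} applies; and second, computing the quantity $RT(\mathcal{G},{\bf X})$ explicitly in terms of $br(\mathcal{G})$, $\beta$ and $\delta$. For the first point, since $\delta_e=\delta\,w_e$ for every edge, the ratio in \eqref{maincond} equals $\delta$ identically, so \eqref{maincond} holds with $M=\max(\delta,\delta^{-1})$; this is exactly the ``multiplicative {\scriptsize ORRW}'' remark made after the statement of Theorem~\ref{mainth}.

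The core of the proof is the computation of $\psi(e)$, $\Psi(e)$ and hence $RT(\mathcal{G},{\bf X})$. With $w_e=\beta^{-|e|}$ and $\delta_e=\delta\beta^{-|e|}$, for an edge $e$ with $|e|=n$ the definition \eqref{defpsi} gives
\[
\psi(e)=\frac{\sum_{g<e}\delta_g^{-1}}{w_e^{-1}+\sum_{g<e}\delta_g^{-1}}
=\frac{\delta^{-1}\sum_{k=1}^{n-1}\beta^{k}}{\beta^{n}+\delta^{-1}\sum_{k=1}^{n-1}\beta^{k}},
\]
since the edges $g<e$ are precisely those of generations $1,\dots,n-1$ along the path from the root to $e^+$. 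When $\beta\in(0,1]$ the geometric sum $\sum_{k\ge1}\beta^k$ converges (or is linear in $n$ when $\beta=1$), while $w_e^{-1}=\beta^{n}$ stays bounded (or equal to $1$), so one checks that $\prod_{n}\psi$ of the edges along any ray stays bounded below by a positive constant; hence $\Psi(e)\ge c>0$ uniformly, every cutset sum $\sum_{e\in\pi}\Psi(e)^\lambda$ is bounded below (a cutset is finite, but one must be a little careful: actually one wants $\inf_\pi\sum_{e\in\pi}\Psi(e)^\lambda>0$, which follows because $\Psi(e)^\lambda\ge c^\lambda$ and any cutset is nonempty), so $RT(\mathcal{G},{\bf X})=\infty$ and transience for all $\delta>0$ follows from Theorem~\ref{mainth}. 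When $\beta>1$, summing the geometric series gives $\sum_{k=1}^{n-1}\beta^k=\frac{\beta(\beta^{n-1}-1)}{\beta-1}$, and after dividing numerator and denominator by $\beta^n$ one finds, up to lower-order terms in $\beta^{-n}$,
\[
\psi(e)=\frac{\delta^{-1}\frac{\beta^{-1}-\beta^{-n}}{1-\beta^{-1}}}{1+\delta^{-1}\frac{\beta^{-1}-\beta^{-n}}{1-\beta^{-1}}}
\ \xrightarrow[n\to\infty]{}\ \frac{\delta^{-1}\frac{\beta^{-1}}{1-\beta^{-1}}}{1+\delta^{-1}\frac{\beta^{-1}}{1-\beta^{-1}}}
=\frac{1}{\delta(\beta-1)+1}.
\]
Thus $\psi(e)=\frac{1}{\delta(\beta-1)+1}(1+O(\beta^{-|e|}))$, so $\Psi(e)=\prod_{g\le e}\psi(g)$ satisfies $\Psi(e)=c^{\pm}\left(\delta(\beta-1)+1\right)^{-|e|}$ for constants $c^{\pm}\in(0,\infty)$ not depending on $e$ (the correction factors multiply to a convergent product since $\sum_n\beta^{-n}<\infty$). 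Plugging this into \eqref{defBR}, the constants $c^\pm$ do not affect the critical exponent, and comparing with \eqref{defbranch} one gets
\[
RT(\mathcal{G},{\bf X})=\sup\Big\{\lambda>0:\inf_{\pi}\sum_{e\in\pi}\big(\delta(\beta-1)+1\big)^{-\lambda|e|}>0\Big\}
=\frac{\ln br(\mathcal{G})}{\ln\left(\delta(\beta-1)+1\right)},
\]
using that $\sup\{\lambda:\inf_\pi\sum_{e\in\pi}\mu^{-|e|}>0\}=br(\mathcal{G})$ is equivalent to $\sup\{\lambda:\inf_\pi\sum_{e\in\pi}(\mu^\alpha)^{-|e|}>0\}=\alpha^{-1}\log_\mu br(\mathcal{G})$ for the substitution $\mu=\delta(\beta-1)+1$, $\alpha=\lambda$. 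Finally, $RT(\mathcal{G},{\bf X})>1$ iff $\ln br(\mathcal{G})>\ln(\delta(\beta-1)+1)$ iff $\delta<(br(\mathcal{G})-1)/(\beta-1)$, and the reverse inequality gives $RT<1$, so Theorem~\ref{mainth} yields the stated recurrence/transience dichotomy.

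The only genuinely delicate point is making the comparison $\Psi(e)\asymp(\delta(\beta-1)+1)^{-|e|}$ fully rigorous: one needs that $\prod_{n\ge1}\big(1+O(\beta^{-n})\big)$ converges to a strictly positive, finite constant, which is immediate from $\sum\beta^{-n}<\infty$, and that the error terms are uniformly controlled for all edges (they are, since $\psi(g)$ depends only on $|g|$). Everything else is bookkeeping: identifying which edges $g$ satisfy $g<e$ resp. $g\le e$, summing two geometric series, and invoking the change-of-variable identity relating the branching-ruin-type exponent with base $\delta(\beta-1)+1$ to the branching number $br(\mathcal{G})$ with base $e$. I would also remark that the case $\beta=1$ recovers Theorem~\ref{corpoly} (with $br_r(\mathcal{G})=\infty$ here since $br(\mathcal{G})>1$ forces exponential growth, consistent with $\ln br(\mathcal{G})/\ln 1=\infty$) and the case $\delta=1$ recovers the classical biased random walk threshold at bias $\beta=br(\mathcal{G})$.
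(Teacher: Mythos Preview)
Your approach is the same as the paper's: compute $\Psi(e)$ asymptotically, compare the cutset sums with the definition \eqref{defbranch} of $br(\mathcal{G})$, and read off $RT(\mathcal{G},\X)$. The computation for $\beta>1$ is correct and fleshes out what the paper simply calls ``straightforward computations''; in particular your control $\psi(e)=\frac{1}{\delta(\beta-1)+1}\big(1+O(\beta^{-|e|})\big)$ and the fact that $\prod_n(1+O(\beta^{-n}))$ converges give exactly the two-sided bound $\Psi(e)\asymp(\delta(\beta-1)+1)^{-|e|}$ that the paper states.

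There is, however, one incorrect step. For $\beta=1$ you claim that $\prod_n\psi$ along a ray stays bounded below by a positive constant. This is false: when $\beta=1$ the process is plain {\scriptsize ORRW}, so $\psi(e)=\frac{|e|-1}{|e|-1+\delta}$ and $\Psi(e)\asymp|e|^{-\delta}\to 0$. The conclusion $RT(\mathcal{G},\X)=\infty$ is still correct, but for a different reason: the polynomial decay $|e|^{-\lambda\delta}$ is dominated by the exponential growth of the tree coming from $br(\mathcal{G})>1$, so $\inf_\pi\sum_{e\in\pi}\Psi(e)^\lambda>0$ for every $\lambda>0$. This is exactly the content of Corollary~\ref{corsuper}, which is how the paper handles $\beta=1$. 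You in fact acknowledge the correct mechanism in your closing remark; just do not bundle $\beta=1$ together with $\beta\in(0,1)$ in the main argument, since the two cases behave differently.
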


 \begin{remark}\label{excited}
As explained in the introduction, we believe that our techniques can be used for different models. In particular, it should be possible to apply those to excited random walks on trees. It should be noted that, as a first step, it is quite straightforward to apply the techniques to the  {\it $M$-digging random walk}, an extreme case of the excited random walk introduced in \cite{Volk} and \cite{BasdSingh}. This would provide new results about this model on general trees.
 \end{remark}

\begin{remark}\label{remrwre}
It is possible to implement these techniques in order to study random walk in random environment (\Rw). We obtain, in a separate work (in progress), criterion for the recurrence/transience of \Rw~when the environment is not independent and under some general assumption, generalizing \cite{CollBarb}. For random walks in independent random environment, we believe that our techniques can be pushed to study the critical phases of \Rw, left open in \cite{LyoPem}. Finally, it should be noted that one of the critical cases was studied in \cite{PP}, for i.i.d.~and balanced environments. Their results can be rephrased as follows: on a tree $\mathcal{G}$, if the branching-ruin number is such that $br_r(\mathcal{G})>1/2$ then  the \Rw~is transient and if $br_r(\mathcal{G})<1/2$ then it is recurrent.
\end{remark}

\section{Features of the Branching-Ruin Number} \label{features}

In this Section, we explore different aspects of the branching-ruin number. First, we relate it to the growth of polynomial trees. Second, we propose a construction in order to provide a polynomial counterpart of Galton-Watson trees and show how the branching-ruin number naturally appears in the structure of these random trees. Third, we express the number $RT(\cdot,\cdot)$, defined in \eqref{defBR}, and in particular the  branching-ruin number in terms of the Hausdorff dimension of the boundary of the tree at infinity with respect to a particular metric.

\subsection{Growth of polynomial trees} \label{growth}

As highlighted in the introduction, the branching-ruin number of a  tree, see \eqref{defBRNS}, appears to be a nice way to measure polynomial trees.  For a tree $\mathcal{G}$, we define
the {\it polynomial growth} of the tree as
\begin{align*}
Pgr(\mathcal{G})= \sup\left\{\lambda>0: \liminf_{n\to \infty}\sum_{e\in E_n}n^{-\lambda}>0\right\}=\liminf_{n\to\infty} \frac{\ln\left(\left|E_n\right|\right)}{\ln\left(n\right)},
\end{align*}
where $E_n=\{e\in E: \ |e|=n\}$ is the set of edges at generation $n$.\\

By comparing it to \eqref{defBRNS}, it is easy to see that $br_r(\mathcal{G})\le Pgr(\mathcal{G})$, as the sets $E_n$ are particular choices of cutsets. In general, these two numbers may not be equal, and one can easily find examples where they indeed differ (e.g.~build a polynomial tree with a structure similar to the second example p.936 of \cite{L90}). Nevertheless, one can prove that if $\mathcal{G}$ is {\it spherically symmetric} (i.e.~if the degree of a vertex depends only on its generation) then $br_r(\mathcal{G})= Pgr(\mathcal{G})$.\\

In particular, if a tree $\mathcal{G}$ is  spherically symmetric  and if  $\left|E_n\right|\times n^{-a}$ is asymptotically bounded away from $0$ and the infinity for some $a\in(0,\infty)$, then $br_r(\mathcal{G})=a$.

\subsection{Generating random polynomial trees} \label{randomtree}
In this Section, we consider a natural way to generate random polynomial trees and we show how the branching-ruin number arises naturally from the structure of the tree. This is similar to the fact that the branching number of an infinite supercritical Galton-Watson tree is a.s.~equal to its mean offspring (see \cite{L90}).\\

We do not work with the most general way to generate polynomial trees, but we use a construction that looks to  be  an interesting polynomial counterpart to Galton-Watson trees. As for the latter, the law of the random trees we consider depends only on one probability distribution and its behavior (i.e.~if it is infinite with positive probability or not) depends only on the mean of this distribution. The general idea of this construction uses the fact that,  along any infinite ray of a polynomial tree, most of vertices have only one child and, more and more rarely (logarithmically often), a vertex behaves differently and has  several children or no child. Hence, a typical ray in an infinite polynomial tree looks most of the time like a line where vertices have only one child, plus some rare vertices with several children, providing the tree structure. Our construction also allows for leaves in the tree. The tree we propose can be seen as a Galton-Watson tree where each edge is replaced by a random number of edges in series (depending on the height).\\
Interestingly, the branching-ruin number  turns out to be the natural parameter for this random tree, that is the mean of the distribution mentioned above.\\

Let us construct this polynomial random tree. Start by fixing a collection of nonnegative real numbers $(p_k)_{k\ge-1}$ such that $\sum_{k\ge-1}p_k=1$ and $p_{-1}\neq1$. Let $L$ be an integer-valued random variable which is equal to $k$ with probability $p_k$, for any integer $k\ge-1$. This generic random variable will be used to define the offspring distributions in the tree. Assume that ${\bf E}[L^2]=:\sigma^2<\infty$ and define $m={\bf E}[L]\in(-1,\infty)$.\\
For each $n\ge1$, let $\epsilon_n$ be a random variable taking values in $\{0,1\}$ and defined by ${\bf P}(\epsilon_n=1)=1/n=1-{\bf P}(\epsilon_n=0)$.\\
Now construct a random tree $\mathcal{T}_m$ iteratively,  starting with one vertex at level $1$ and such that each vertex $x$ at level $n\ge1$ has $Z_{n}^{(x)}$ offsprings in the tree, where $Z_{n}^{(x)}=1+\epsilon_{n}^{(x)}L^{(x)}$ with $\epsilon_{n}^{(x)}$ and $L^{(x)}$ being independent copies of $\epsilon_{n}$ and $L$, respectively, and are  independent of everything else.\\

For this random tree $\mathcal{T}_m$, a vertex at generation $n$ has an average number of offspring equal to $1+\frac{m}{n}$. Then, it is natural to expect that this tree is infinite with positive probability if and only if $m>0$, see Proposition \ref{siham} below.\\
One could argue that the law of $(\epsilon_n)$ is arbitrary, but one should be convinced that it is essentially the only good choice by the following arguments.
First, if $m>0$, the average number of vertices in the $n$-th generation of $\mathcal{T}_m$ is of the order of $n^m$ and $\mathcal{T}_m$ is indeed a polynomial tree. Second, if $\epsilon_n$ was equal to $1$ with probability $1/n^a$ with $a\in(0,1)$ (resp.~$a>1$), then we would obtain that the size of the generations behaves like a stretched exponential (resp.~converges to a finite quantity). Hence, choosing $a=1$ is indeed the natural feasible choice in order to obtain a tree with polynomial growth.\\

The following result again justifies our statement that the branching-ruin number is a good way to measure polynomial trees.
\begin{proposition} \label{siham}
Let $\mathcal{T}_m$ be a random polynomial tree constructed as above. First, $\mathcal{T}_m$ is infinite with positive probability if and only if $m>0$. Second, if $m>0$ and on the event that $\mathcal{T}_m$ is infinite, we have that $br_r(\mathcal{T}_m)=m$ almost surely.
\end{proposition}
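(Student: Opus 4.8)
The plan is to prove the two assertions of Proposition~\ref{siham} separately, using a branching-process analysis for the first and a second-moment / cutset argument for the second.

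\textbf{Survival of $\mathcal{T}_m$.} First I would study the generation sizes $W_n := \#\{x : |x| = n\}$ of $\mathcal{T}_m$, where for convenience the root sits at level $1$. Conditionally on $W_n$, the next generation is a sum of $W_n$ independent copies of $Z_n = 1 + \epsilon_n L$, so
\[
\bE[W_{n+1} \mid W_n] = W_n \cdot \bE[Z_n] = W_n\left(1 + \frac{m}{n}\right).
\]
Hence $\bE[W_n] = \prod_{k=1}^{n-1}(1 + m/k) \asymp n^m$, which already shows the ``polynomial'' nature of the tree and, via Borel--Cantelli / first-moment bound, that $\mathcal{T}_m$ dies out almost surely when $m \le 0$ (for $m<0$ the expectation is summable along a subsequence; for $m=0$ one needs a small extra argument, e.g.\ that $W_n$ is a nonnegative martingale with $\bE[W_n]=1$ that must converge a.s., combined with the fact that the increments do not vanish unless the process is absorbed at $0$ — here the hypothesis $p_{-1}\neq 1$, i.e.\ $L$ is not identically $-1$, and $m=0$ force genuine fluctuations so the martingale limit is $0$). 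For $m > 0$, I would show survival with positive probability by a second-moment estimate on $W_n$: compute $\bE[W_n^2]$ using $\mathrm{Var}(Z_n) = O(1/n)$ (this is where $\sigma^2 = \bE[L^2] < \infty$ enters) to get $\bE[W_n^2] \le C\, (\bE[W_n])^2$ uniformly in $n$, and then apply the Paley--Zygmund inequality to conclude $\P(W_n > 0) \ge \P(W_n \ge \tfrac12 \bE[W_n]) \ge c > 0$ for all $n$, hence $\P(\text{survival}) = \lim_n \P(W_n>0) \ge c > 0$.

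\textbf{Identifying the branching-ruin number.} For the second assertion, on the event of survival, I would prove $br_r(\mathcal{T}_m) = m$ by establishing the two inequalities separately, using the cutset definition~\eqref{defBRNS}.

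\emph{Upper bound $br_r(\mathcal{T}_m) \le m$.} Here I would use the family of ``level'' cutsets $\pi_n := \{e : |e| = n\}$, for which $\sum_{e \in \pi_n} |e|^{-\lambda} = W_{n+1}\, n^{-\lambda}$ (up to an index shift). It suffices to show that for any $\lambda > m$, $\liminf_n W_n\, n^{-\lambda} = 0$ a.s.\ on survival. Since $\bE[W_n\, n^{-\lambda}] \asymp n^{m-\lambda} \to 0$, along a suitable deterministic subsequence $W_{n_j} n_j^{-\lambda} \to 0$ in probability, and a Borel--Cantelli argument (choosing $n_j$ growing fast enough, using Markov's inequality) upgrades this to a.s.\ convergence to $0$ along that subsequence, which forces $\inf_{\pi} \sum_{e\in\pi}|e|^{-\lambda} = 0$. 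Hence $br_r(\mathcal{T}_m) \le m$.

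\emph{Lower bound $br_r(\mathcal{T}_m) \ge m$.} This is the main obstacle. I need: for every $\lambda < m$, there is (with positive probability, then a.s.\ on survival by a $0$--$1$ argument) a constant $c>0$ with $\inf_{\pi \in \Pi} \sum_{e\in\pi} |e|^{-\lambda} \ge c$. The natural tool is Lyons' correspondence between cutset sums and flows / percolation: one wants to construct a nonzero flow $\theta$ from the root to infinity with $\theta(e) \le |e|^{-\lambda}$ for every edge $e$ (equivalently, use the max-flow min-cut theorem to convert the cutset bound into a flow, or run a percolation on $\mathcal{T}_m$ keeping edge $e$ with probability $\propto |e|^{-\lambda}$ and show the retained subtree is infinite with positive probability). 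I would set this up as a recursive/self-similar estimate: let $q_n$ be the probability (or the survival probability of the associated percolation) starting from a vertex at level $n$; using that such a vertex has on average $1+m/n$ children and that passing to level $n+1$ costs a factor $((n+1)/n)^{-\lambda} \approx 1 - \lambda/n$, the expected ``mass'' multiplies by roughly $(1+m/n)(1-\lambda/n) = 1 + (m-\lambda)/n + O(1/n^2)$, whose product over levels diverges like $n^{m-\lambda} \to \infty$ since $\lambda < m$. Making this rigorous requires a second-moment control on the percolated generation sizes analogous to the survival argument above (again using $\sigma^2 < \infty$) to guarantee that the retained subtree survives with positive probability, and then the $0$--$1$ law / decomposition into i.i.d.-like subtrees promotes ``positive probability'' to ``a.s.\ on non-extinction''. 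Combining the two inequalities gives $br_r(\mathcal{T}_m) = m$ a.s.\ on $\{\mathcal{T}_m \text{ infinite}\}$, completing the proof.
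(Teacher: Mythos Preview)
Your proposal is correct. For the second assertion, the lower bound $br_r(\mathcal{T}_m)\ge m$ follows the same route as the paper: run an independent percolation on $\mathcal{T}_m$ retaining an edge at level $n$ with probability roughly $1-\lambda/n$, analyse the retained tree as a branching process in varying environment with mean offspring $\approx 1+(m-\lambda)/n$, and apply a second-moment bound for $\lambda<m$ (the paper quotes Agresti's inequality \cite{BP} where you use Paley--Zygmund). The paper identifies the percolation threshold with $br_r(\mathcal{T}_m)$ by invoking Remark~\ref{remperco}, i.e.\ the machinery behind Theorem~\ref{mainth}, whereas you propose to make this link directly via max-flow min-cut; your version is self-contained, theirs shorter given the rest of the article. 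For the upper bound you use level cutsets and a first-moment estimate, which is if anything simpler than the paper's route through the percolation supermartingale plus Remark~\ref{remperco}. The genuine difference is in the first assertion, where the paper's argument is markedly slicker than your moment computation: contract every vertex $x$ with $\epsilon^{(x)}_{|x|}=0$ (such a vertex has exactly one child) to obtain a genuine Galton--Watson tree with offspring law $1+L$ and mean $1+m$; this new tree is infinite if and only if $\mathcal{T}_m$ is, so survival with positive probability is simply equivalent to $1+m>1$. This disposes of $m<0$, $m=0$, $m>0$ in one line and avoids your somewhat hand-wavy martingale-fluctuation sketch for the critical case $m=0$.
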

\begin{proof}
The first statement is easy to prove by the following observation. Let $\mathcal{T}_m$ be a random tree as described above and apply the following procedure. For any vertex $x\in\mathcal{T}_m$, if $\epsilon^{(x)}_{|x|}=0$, then we remove $x$ from the tree (together with its incident edges) and add an edge between the father of $x$ and the unique offspring of $x$; otherwise, if $\epsilon^{(x)}_{|x|}=1$, we keep $x$ as it is. The tree obtained in this manner is simply a Galton-Watson tree with offspring distribution given by that of $1+L$, and this new tree is infinite if and only if $\mathcal{T}_m$ is infinite. Hence, $\mathcal{T}_m$ is infinite with positive probability if and only if $1+m>1$, which proves the first statement.\\

Let us now prove the second statement of the Proposition. We mimic a simple argument from \cite{L90}. Let us consider the percolation on $\mathcal{T}_m$ where each edge $e$ at level $n$ is open with probability $1-\delta/n$ for some $\delta>0$ (forcing the edge to be open as long as $\delta>n$).\\
On one hand, we claim that the cluster of the root is infinite with positive probability if $\delta<br_r(\mathcal{T}_m)$  and it is a.s.~finite if  $\delta>br_r(\mathcal{T}_m)$. First, if we let $\X$ be a \Go~satisfying, for $e\in E$,  $\psi(e)=1-\delta/|e|$ if $|e|>\delta$ and $\psi(e)=1$ otherwise, one can easily compute, using \eqref{defBRNS} and \eqref{defBR}, that $RT(\mathcal{G},\X)=br_r(\mathcal{G})/\delta$ (a similar computation is done in the proof of Theorem \ref{corpoly}). Second,  by Remark \ref{remperco} and Theorem \ref{corpoly},  fixing $\mathcal{T}_m$ on the event that it is infinite, then the cluster of the root is infinite with positive probability if $\delta<br_r(\mathcal{T}_m)$  and it is a.s.~finite if  $\delta>br_r(\mathcal{T}_m)$.\\
On the other hand, this percolation simply defines a random subtree $\mathcal{T}_{\text{perc}}$ of the random tree $\mathcal{T}_m$. Each vertex at level $n$ in the subtree has an average number of offsprings equal to $(1-\delta/n)(1+m/n)= 1+(m-\delta)n^{-1}-\delta mn^{-2}$. 
Let us prove that $\mathcal{T}_{\text{perc}}$ is infinite with positive probability if and only if $m-\delta>0$. This would imply that $br_r(\mathcal{T}_m)=m$ and conclude the proof.\\
Let $V_n=\{v\in V:\ |v|=n\}$, for any $n\ge0$, be the set of vertices at generation $n$ of $\mathcal{T}_{\text{perc}}$. Note that $V_n$ is random. Let $\widetilde{Z}_j$ be the offspring distribution of a vertex at generation $j$ in $\mathcal{T}_{\text{perc}}$. Let $\mathcal{G}_n=\sigma\left(V_0,\dots,V_n\right)$ be the filtration generated by all the information contained in the $n+1$ first generations of the tree. One can easily see that, for any $n\ge0$,
\[
{\bf E}\left[\left.\left|V_{n+1}\right|\right|\mathcal{G}_n\right]=\left|V_n\right|\times \left(1+\frac{m-\delta}{n}-\frac{\delta m}{n^{2}}\right).
\]
If $m-\delta\le0$, $\left(\left|V_n\right|\right)_n$ is a nonnegative super-martingale and thus converges to $0$ almost surely.\\
Now, assume that $m-\delta>0$. Theorem 1 of \cite{BP}, see the upper-bound of (2.4) therein, states that
\[
\lim_{n\to\infty}{\bf P}\left(\left|V_n\right|>0\right)\ge \limsup_{n\to\infty} \left[  {\bf E}\left[\left|V_n\right|\right]^{-1}+\sum_{j=1}^{n}\frac{{\bf E}\left[\widetilde{Z}_{j}^2\right]-{\bf E}\left[\widetilde{Z}_{j}\right]}{{\bf E}\left[\widetilde{Z}_{j}\right]}    {\bf E}\left[\left|V_{j}\right|\right]^{-1}  \right]^{-1}.
\]
One can easily compute from the definitions that, for any $j\ge1$,
\[
{\bf E}\left[\widetilde{Z}_{j}^2\right]-{\bf E}\left[\widetilde{Z}_{j}\right]\le \frac{m+\sigma^2}{n}\text{ and } {\bf E}\left[\left|V_{j}\right|\right]\ge cj^{\frac{m-\delta}{2}},
\]
for some constant $c>0$. Hence, as $m-\delta>0$, we obtain that,
\[
{\bf P}\left(\mathcal{T}_{\text{perc}}\text{ is infinite}\right)=\lim_{n\to\infty}{\bf P}\left(\left|V_n\right|>0\right)\ge \lim_{n\to\infty} \frac{1}{1+\sum_{j=1}^{n}\frac{m+\sigma^2}{j}\times \frac{c}{j^{\frac{m-\delta}{2}}}}>0.
\]
Hence, $\mathcal{T}_{\text{perc}}$ is infinite with positive probability if and only if $m-\delta>0$. Recall that we have already proved that  if $br_r(\mathcal{T}_m)-\delta>0$ (resp.~if $br_r(\mathcal{T}_m)-\delta<0$) then $\mathcal{T}_{\text{perc}}$ is infinite with positive probability (resp.~finite a.s.),  therefore we can conclude that $m=br_r(\mathcal{T}_m)$.

\hfill
\end{proof}

\subsection{Hausdorff dimension}\label{sec:Haus}

Here, we prove that the quantity $RT(\cdot,\cdot)$, defined in \eqref{defBR}, and in particular  the branching-ruin number, can be rephrased as the Hausdorff dimension of the boundary of the tree at infinity, with respect to a particular distance.\\
Let us recall  the definition of the Hausdorff dimension of the boundary of an infinite tree as Furstenberg \cite{Furs} defined it, see also \cite{LP}. 
First, the {\it boundary} $\partial\mathcal{G}$ of the tree at infinity is defined as the set of infinite rays, that is the set of all infinite simple paths started from the root (in particular this boundary does not consider the leaves). For an infinite ray $\xi\in\partial\mathcal{G}$, we denote $\xi_n$ the edge of $\xi$ at generation $n$.
A natural metric on $\partial\mathcal{G}$ is the following: if $\xi,\eta\in\partial\mathcal{G}$ have exactly $n$ edges in common, then $d(\xi,\eta)=\exp({-n})$. In particular, for $e\in E$, if we let
\begin{align}\label{Be}
B_e=\left\{\xi\in\partial\mathcal{G}: \xi_{|e|}=e\right\},
\end{align}
then the diameter of $B_e$ is
\[
\mathrm{diam}\ B_e= \min\left\{\exp({-n}): \forall \xi,\eta\in B_e,\ \xi_n=\eta_n\right\}.
\]
Thus, we have that $\mathrm{diam}\ B_e \le \exp\{-|e|\}$ and equality holds if and only if $e^+$ has at least two children in the tree.  A collection $\mathscr{C}$ of subsets of $\partial\mathcal{G}$ is said to be a {\it cover} if
\[
\bigcup_{B\in\mathscr{C}}B=\partial\mathcal{G}.
\]
The Hausdorff dimension of $\partial\mathcal{G}$ is defined as
\[
\mathrm{dim}_{\mathfrak{H}}\ \partial\mathcal{G}=\sup\left\{\lambda>0: \inf_{\mathscr{C}\text{ countable cover}}\sum_{B\in\mathscr{C}}\left(\mathrm{diam}\ B\right)^\lambda>0\right\},
\]
which is also equal to
\[
\mathrm{dim}_{\mathfrak{H}}\ \partial\mathcal{G}=\sup\left\{\lambda>0: \inf_{\pi\in \Pi}\sum_{e\in\pi}\exp({-\lambda |e|})>0\right\}.
\]
This last quantity is simply the natural logarithm of the branching number defined as, by \eqref{defbranch}, we have
\begin{align*}
br(\mathcal{G})=\exp\left( \mathrm{dim}_{\mathfrak{H}}\ \partial\mathcal{G}\right).
\end{align*}

We are now going to define the Hausdorff dimension of the boundary of the tree in a metric induced by the ruin probabilities of a \Go~along the rays of the tree.\\
First, let us restrict ourselves to the case where the quantity $\Psi$ defined in \eqref{defPsi} goes to $0$ along any infinite ray. More precisely, for $\xi\in\partial\mathcal{G}$,  we assume that
\begin{align}\label{assumption}
\lim_{n\to\infty}\Psi(\xi_n)=0.
\end{align}
This assumption simply ensures that $\Psi$ induces a metric on the infinite rays. Recall also that $\Psi$ is decreasing to $0$ along any  ray.\\

Now, let us define the following distance on $\partial\mathcal{G}$: for $\xi,\eta\in\partial\mathcal{G}$, if $e$ is their common edge with highest generation, then $d_\Psi(\xi,\eta)=\Psi(e)$. 
The assumption \eqref{assumption} ensures that $d_\Psi(\xi,\xi)=0$ for any $\xi\in\partial\mathcal{G}$. In particular, for $e\in E$, defining $B_e$ as in \eqref{Be}, we can compute the diameter with respect to $d_\Psi$ to be
\[
\mathrm{diam}_\Psi  B_e= \min\left\{\Psi(g): g\in \xi,\ \forall \xi\in B_e \right\}.
\]
Finally, define the $\Psi$-Hausdorff dimension of $\partial\mathcal{G}$ as
\begin{align*}
\mathrm{dim}^\Psi_{\mathfrak{H}}\ \partial\mathcal{G}&=\sup\left\{\lambda: \inf_{\mathscr{C}\text{ countable cover}}\sum_{B\in\mathscr{C}}\left(\mathrm{diam}_\Psi\ B\right)^\lambda>0\right\}\\
&=\sup\left\{\lambda: \inf_{\pi\in \Pi}\sum_{e\in\pi}\left(\Psi(e)\right)^\lambda>0\right\}.
\end{align*}
Thus, we have that $RT(\mathcal{G},\X)=\mathrm{dim}^\Psi_{\mathfrak{H}}\ \partial\mathcal{G}$. In particular $br_r(\mathcal{G})$ is equal to the Hausdorff dimension of the boundary of the tree at infinity when we choose that the distance between two infinite rays $\xi,\eta\in\partial\mathcal{G}$ with common edge with highest generation $|e|$ is $d(\xi,\eta)=1/|e|$.

\section{Applications of the Branching-Ruin Number} \label{proofcorsuper}
In this Section, we prove that  Theorem \ref{corpoly}, Corollary \ref{corsuper} and Corollary \ref{corbiased} are simple consequences of Theorem \ref{mainth}.

\begin{proof}[Proof of Theorem \ref{corpoly}]
Recall that we consider a {\scriptsize ORRW} $\X$ with parameter $\delta\in(0,\infty)$ and recall the definitions \eqref{defpsi} of $\psi(\cdot)$ and \eqref{defPsi}  of $\Psi(\cdot)$. In this case, by \eqref{defpsi}, we have that, for any edge $e\in E$, $\psi(e)=(|e|-1)/(|e|-1+\delta)$ if $|e|\ge2$ and $\psi(e)=1$ if $|e|=1$.
Hence, for any $\lambda>0$, there exist constants  $c_0,c_1\in(0,\infty)$ such that, for any $\pi\in\Pi$,
\begin{align*}
\sum_{e\in\pi}(\Psi(e))^{\lambda}&\ge \sum_{e\in\pi}\prod_{n=1}^{|e|} \left(1-\frac{\delta}{\delta+n}\right)^{\lambda}\ge \sum_{e\in\pi} c_0\exp\Big\{-\lambda\delta\sum_{n=1}^{|e|} \frac{1}{\delta+n}\Big\}\\
&\ge \sum_{e\in\pi}c_1\frac{1}{|e|^{\lambda\delta}}.
\end{align*}
Similarly,  for any $\lambda>0$, there exist two constants $c_1,c_2\in(0,\infty)$ such that, for any $\pi\in\Pi$,
\begin{align*}
c_1\sum_{e\in\pi}\frac{1}{|e|^{\lambda\delta}}\le \sum_{e\in\pi}(\Psi(e))^{\lambda}\le c_2\sum_{e\in\pi}\frac{1}{|e|^{\lambda\delta}}.
\end{align*}
Finally, by comparing \eqref{defBR} and \eqref{defBRNS}, one can see that $RT(\mathcal{G},\X)=br_r(\mathcal{G})/\delta$. Theorem \ref{mainth} easily provides the conclusion.
\hfill
\end{proof}

\begin{proof}[Proof of Corollary \ref{corsuper}]
Here we assume that $br(\mathcal{G})>1$ and we fix $\delta>0$. Therefore, by \eqref{defbranch}, there exists $\varepsilon>0$ and $c>0$ such that
\[
\inf_{\pi\in \Pi}\sum_{e\in\pi}(1+\varepsilon)^{-|e|}>c.
\]
Hence, for any $\lambda>0$, proceeding as in the previous proof, there exist constants  $c_1,c_3,c_4\in(0,\infty)$ such that, for any $\pi\in\Pi$,
\begin{align*}
\sum_{e\in\pi}(\Psi(e))^{\lambda}&\ge \sum_{e\in\pi}c_1\frac{1}{|e|^{\lambda\delta}}\ge c_3\sum_{e\in\pi}(1+\varepsilon)^{-|e|}> {c_4}.
\end{align*}
Hence, by definition  \eqref{defBR}, we have that  $RT({\mathcal{G}},\X)>1$ and we can thus conclude by Theorem \ref{mainth} that the walk is transient.\hfill
\end{proof}

\begin{proof}[Proof of Corollary \ref{corbiased}]
We now consider $\X$ to be the biased {\scriptsize ORRW} on a tree $\mathcal{G}$ with $br(\mathcal{G})>1$.
One can prove by straightforward computations that, for any $\beta>1$,  any $\delta>0$ and any $\lambda>0$, there exist constants $c_4,c_5\in(0,\infty)$ such that, for any $\pi\in\Pi$,
\begin{align*}
c_4\sum_{e\in\pi}\left( \frac{1}{\delta(\beta-1)+1}\right)^{\lambda|e|}\le \sum_{e\in\pi}(\Psi(e))^{\lambda}\le c_5 \sum_{e\in\pi}\left( \frac{1}{\delta(\beta-1)+1}\right)^{\lambda|e|}.
\end{align*}
If $\beta=1$, this corresponds to the statement of Corollary \ref{corsuper}. If $\beta\in(0,1)$ and for any $\delta>0$, it is easy to check that $\Psi(e)$ converges to a positive constant as $|e|$ goes to infinity, on any infinite ray, and therefore $RT(\mathcal{G},{\bf X})=\infty$, for any $\delta>0$.\\
If $\beta>1$, using the definition \eqref{defbranch} of the branching number, the definition \eqref{defBR} of $RT(\cdot,\cdot)$ and by a simple computation, we have that
\[
RT(\mathcal{G},{\bf X})=\frac{\ln\left(br(\mathcal{G})\right)}{\ln\left(\delta(\beta-1)+1\right)}.
\]
One can then conclude about the recurrence/transience of $\X$ by applying Theorem \ref{mainth}.
\hfill
\end{proof}

 \section{Extensions}\label{sec:ext}
 
Here, we define the same construction as in \cite{CHK} which is a particular case of Rubin's construction. This will allow us to emphasize useful independence properties of the walk on disjoint subsets of the tree.\\

Let $(\Omega, \mathcal{F},\bP)$ denote a probability space on which
\begin{align}\label{defY}
{\bf Y}=(Y(\nu,\mu,k): (\nu,\mu)\in V^2, \mbox{with }\nu \sim \mu, \textrm{ and }k \in \N)
\end{align}
is a family of independent  exponential random variables with mean 1, and where $(\nu,\mu)$ {denotes} an {\it ordered} pair { of vertices}. Below, we use these collections of random variables to generate the steps of $\X$. Moreover,   we  define  a {\it family} of coupled walks using the same collection  of \lq clocks\rq\  $ {\bf Y}$.

Define, for any integer $j\ge0$ and any $\nu,\mu\in V$ with $\nu\sim \mu$, the quantities
\begin{align} \label{wj1}
r(\nu,\mu,j)&=w_{[\nu,\mu]}\1_{\{j=0,\nu<\mu\}}+\delta_{[\nu,\nu_i]}\1_{\{j\ge1\}\cup\{\mu<\nu\}}.
\end{align}

As it was done in \cite{CHK}, we are now going to define a family of coupled processes on the subtrees of $\mathcal{G}$. For any rooted subtree $\mathcal{G}'$ of $\mathcal{G}$, Let us define the {\it extension} $\X^{ (\mathcal{G}')}=(V',E')$  on $\mathcal{G}'$ as follows. Let   the root $\r'$ of $\mathcal{G}'$ be defined as the vertex of $V'$ with smallest distance to $\r$.
For  a collection of nonnegative integers $\bar{k}=(k_\mu)_{\mu: [\nu,\mu]\in E'} $, let 
\[
A^{ (\mathcal{G}')}_{\bar{k},n,\nu}=\{X^{ (\mathcal{G}')}_n = \nu\}\cap\bigcap_{\mu: [\nu,\mu]\in E'} \{\#\{1\le j \le n \colon (X^{ (\mathcal{G}')}_{j-1},X^{ (\mathcal{G}')}_j) = (\nu,\mu)\} = k_\mu\}.
\]
Note that the event $A^{ (\mathcal{G}')}_{\bar{k},n,\nu}$ deals with jumps along oriented edges.\\
Set $\X^{ (\mathcal{G}')}_0=\r'$ and, for $\nu$, $\nu'$ such that $[\nu, \nu']\in E'$ and for $n\ge0$, on the event 
\begin{align}\label{ursula}
A^{ (\mathcal{G}')}_{\bar{k},n,\nu}\cap \left\{\nu' = \argmin_{\mu: [\nu,\mu]\in E'}\Big\{\sum_{i=0}^{k_{\mu}}\frac{Y(\nu, \mu, i)}{r(\nu, \mu,i)} \Big\}\right\}, 
\end{align}
 we set $X^{ (\mathcal{G}')}_{n+1} = \nu'$, where the function $r$ is defined in \eqref{wj1} and the clocks $Y$'s are from the same collection ${\bf Y}$ fixed in \eqref{defY}.\\
 
 We define $\X=\X^{(\mathcal{G})}$ to be the extension on the whole tree.
  It is easy to check, from properties of independent exponential random variables and the memoryless property, that this provides a construction of the \Go~$\X$ on $\mathcal{G}$.\\
This continuous-time embedding is classical: it is called {\it Rubin's construction}, after Herman Rubin  {(see the Appendix in Davis \cite{Dav90})}.\\
Now, if we consider proper subtrees $\mathcal{G}'$ of $\mathcal{G}$, one can check that, with these definitions, the steps of $\X$ on the subtree $\mathcal{G}'$ are given by the steps of $\X^{ (\mathcal{G}')}$ (see \cite{CHK} for details). As it was noticed in \cite{CHK}, for two subtrees $\mathcal{G}'$ and $\mathcal{G}''$ whose edge sets are disjoint, the extensions $\X^{ (\mathcal{G}')}$ and $\X^{ (\mathcal{G}'')}$ are independent as they are defined by two disjoint sub-collections of ${\bf Y}$.

Of particular interest will be the case where $\mathcal{G}'=[\r,\nu]$ is the unique self-avoiding path connecting $\r$ to $\nu$, for some $\nu\in\mathcal{G}$.  In this case, we write $\X^{(\nu)}$ instead of $\X^{([\r,\nu])}$, and we denote $T^{(\nu)}(\cdot)$ the return times associated to $\X^{(\nu)}$. For simplicity, we will also write  $\X^{(e)}$ and $T^{(e)}(\cdot)$ instead of $\X^{(e^+)}$ and $T^{(e^+)}(\cdot)$ for $e\in E$.
Finally, it should be noted that, for any $e\in E$ and any $g\le e$,
\begin{align}\label{eqhit1}
\psi(g)&=\bP\left(T^{(e)}(g^+)\circ \theta_{T^{(e)}(g^-)}<T^{(e)}({\r})\circ \theta_{T^{(e)}(g^-)}\right),\\ \label{eqhit}
\Psi(e)&=\bP\left(T^{(e)}(e^+)<T^{(e)}({\r})\right),
\end{align}
where $\theta$ is the canonical shift on the trajectories.
 
 \section{Recurrence in Theorem \ref{mainth}: the case ${RT(\mathcal{G}, \X)}<1$} \label{sectrec}
 In this section, we assume that  $RT(\mathcal{G},\X)<1$ and prove recurrence. 
 The first part of Theorem \ref{mainth} is a consequence of  the following proposition, which  is an application of the  first moment method.
 \begin{proposition}\label{proprec}
 If
 \begin{align}\label{almadort}
\inf_{\pi\in \Pi}\sum_{e\in\pi}\Psi(e)=0,
\end{align}
then $\X$ is recurrent.
 \end{proposition}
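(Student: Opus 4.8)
The plan is to use a first-moment (union-bound) argument over the edges crossed by the walk. The key identity to exploit is \eqref{eqhit}, which says that $\Psi(e)=\bP(T^{(e)}(e^+)<T^{(e)}(\r))$ is exactly the probability that the one-dimensional extension $\X^{(e)}$ along the path $[\r,e^+]$ reaches $e^+$ before returning to the root. Intuitively, if the full walk $\X$ is transient it must cross infinitely many edges, and in particular for every cutset $\pi$ it must cross at least one edge of $\pi$; so the event $\{\X \text{ is transient}\}$ is contained, for each $\pi\in\Pi$, in the event that some $e\in\pi$ is ever crossed by $\X$ \emph{before} $\X$ returns to $\r$. The heart of the matter is to bound the probability of the latter event by $\sum_{e\in\pi}\Psi(e)$ (up to a harmless constant), and then let $\pi$ range over $\Pi$ and use the hypothesis \eqref{almadort} that the infimum of these sums is $0$.

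\textbf{Key steps, in order.} First I would make precise the reduction to the one-dimensional extensions: for a fixed edge $e$, I claim that the probability that $\X$ reaches $e^+$ before returning to $\r$ is at most (a constant times) $\Psi(e)=\bP(T^{(e)}(e^+)<T^{(e)}(\r))$. The point is that, before $\X$ first hits $e^+$, it has no information about, and has not reinforced, any edge strictly off the path $[\r,e^+]$; more carefully, the event $\{\X \text{ hits } e^+ \text{ before } \r\}$ is contained in the event that the extension $\X^{(e)}$ on the path $[\r,e^+]$ hits $e^+$ before $\r$, because the excursions of $\X$ away from the path $[\r,e^+]$ do not help it progress along the path and one can couple (via Rubin's construction, using the same clocks ${\bf Y}$) the motion of $\X$ restricted to $[\r,e^+]$ with $\X^{(e)}$. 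This gives
\[
\bP\big(\X \text{ reaches } e^+ \text{ before returning to } \r\big)\le \Psi(e).
\]
Second, I would take a union bound over a cutset $\pi$: on the event that $\X$ does not return to $\r$ (i.e.\ $\{T(\r)=\infty\}$), the trajectory is an infinite self-avoiding-at-infinity path escaping to the boundary, hence it must cross some edge of every cutset $\pi$, and that edge is reached before the (never-occurring) return to $\r$. Therefore
\[
\bP\big(T(\r)=\infty\big)\le \sum_{e\in\pi}\bP\big(\X \text{ reaches } e^+ \text{ before returning to } \r\big)\le \sum_{e\in\pi}\Psi(e).
\]
Third, taking the infimum over $\pi\in\Pi$ and invoking \eqref{almadort} gives $\bP(T(\r)=\infty)=0$, i.e.\ $\X$ is recurrent. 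Finally, to conclude the first part of Theorem \ref{mainth} from Proposition \ref{proprec}: if $RT(\mathcal{G},\X)<1$ then by \eqref{defBR} the value $\lambda=1$ is not in the set over which the supremum is taken, so $\inf_{\pi\in\Pi}\sum_{e\in\pi}(\Psi(e))^{1}=0$, which is precisely \eqref{almadort}.

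\textbf{Main obstacle.} The delicate point is the first step: justifying rigorously that $\bP(\X \text{ reaches } e^+ \text{ before returning to } \r)\le \Psi(e)$, i.e.\ that the multidimensional walk is no more likely to traverse the path $[\r,e^+]$ than its one-dimensional restriction $\X^{(e)}$. One has to argue that the side-branches hanging off $[\r,e^+]$ can only delay or abort the crossing, never accelerate it, and that the reinforcement happening on those side-branches is irrelevant to the first passage to $e^+$. The clean way is to use the coupling via Rubin's construction from Section \ref{sec:ext}: running both $\X$ and $\X^{(e)}$ off the same clock family ${\bf Y}$, one checks that the sequence of vertices visited by $\X$ \emph{along the path} $[\r,e^+]$, recorded only at the times $\X$ is on that path, is dominated by (in fact stochastically equal to, before the first return to $\r$ that splits off a side-excursion) the trajectory of $\X^{(e)}$; the decomposition \eqref{eqhit1} of $\psi$ into one-dimensional ruin probabilities along consecutive segments makes this transparent. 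Once this domination is in hand, the rest is a routine union bound, so I expect essentially all the work to be in setting up and verifying this coupling cleanly.
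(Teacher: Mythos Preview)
Your proposal is correct and follows essentially the same approach as the paper: the union bound over a cutset combined with the domination $\bP(T(e^+)<T(\r))\le\bP(T^{(e)}(e^+)<T^{(e)}(\r))=\Psi(e)$, then passing to the infimum over $\Pi$. The only remark is that the step you flag as the ``main obstacle'' is in fact immediate from the paper's own framework: by the construction in Section~\ref{sec:ext} the steps of $\X$ restricted to the path $[\r,e^+]$ \emph{are} the steps of $\X^{(e)}$ (same clocks), so $\{T(e^+)<T(\r)\}\subset\{T^{(e)}(e^+)<T^{(e)}(\r)\}$ is a one-line observation rather than a delicate coupling; your direct infimum is also marginally cleaner than the paper's detour through Borel--Cantelli.
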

 \begin{proof}
Here, we assume that \eqref{almadort} holds
and that there exists a sequence of cutsets $(\pi_n)_{\n\ge0}\subset\Pi$ such that $\sum_{e \in\pi_n}(\Psi(e))\le \exp({-n})$.\\
We want to estimate the probability that $\X$ escapes to infinity from $\r$, i.e.~never returns to ${\r}$. This requires that $\X$ jumps through at least one edge of each cutset $\pi_n$ before returning to  ${\r}$.\\
 First, fix some edge  $e\in E$ and recall the definition of the extension $\X^{\ssup e}$ from Section \ref{sec:ext}.\\
Using \eqref{eqhit}, we have that
\begin{align*}
&\bP\left(\bigcup_{e\in \pi_n} \left\{T(e^+)<T( {\r})\right\}\right)\le \sum_{e \in \pi_n} \bP\left(T(e^+)<T( {\r})\right)\\
&\le\sum_{e \in \pi_n} \bP\left(T^{(e)}(e^+)<T^{\ssup e}( {\r})\right)=\sum_{e \in \pi_n}\Psi(e)\le {\exp\{-n\}}.
\end{align*}
As this last quantity is summable, the events $\bigcup_{e\in \pi_n} \left\{T(e^+)<T( {\r})\right\}$, $n\ge 0$, happen only finitely often by Borel-Cantelli Lemma, and therefore
\[
\bP(T( {\r})=\infty)\le \bP\left(\bigcap_{n\ge0}\; \;\bigcup_{e\in \pi_n} \left\{T(e^+)<T( {\r})\right\}\right)=0.
\]
This concludes the proof that $\X$ is recurrent.\hfill
\end{proof}

\section{Link with percolation} \label{sec:linkperc}

We are now going to interpret the set of edges crossed before returning to ${\r}$ as the cluster of some correlated percolation and give a stochastic lower-bound to it in terms of  a cluster in  a certain {\it quasi-independent} percolation (see the definition in Lemma \ref{lemmaquasi}). \\

Denote by $\C(\r)$ the set of edges which are crossed by $\X$ before returning to ${\r}$, that is
\[
\C(\r)=\left\{ e \in E \colon T(e^+)<T({\r})\right\}.
\]
This set can  be seen as the cluster containing $\r$ in some correlated percolation.  Next we consider  a different correlated percolation which will be more convenient to us.
Recall Rubin's construction and the extensions introduced in Section \ref{sec:ext}. Then define
\[
\C_{\mathrm{CP}}(\r)= \left\{e \in E:T^{\ssup e}(e^+)<T^{\ssup v}({\r})\right\},
\]
where $T^{\ssup e}(\cdot)$ is defined right before \eqref{eqhit1}. This  defines  a correlated percolation in which an edge $e\in E$ is open if and only if $e\in \C_{\mathrm{CP}}(\r)$. As this percolation is defined using the same extensions as for $\X$, we keep the notation $\bP$ for its measure.
In this context, extensions are useful because, in order to know whether an edge $e$ is open or not, we get rid of the technical complications due to the events on which $\X$ escapes to infinity before  either hitting $e^+$ or returning to ${\r}$. Nevertheless, note that this percolation still has correlation at any length. In fact, in order to determine if two given edges are open or not we need to observe the behaviour of  coupled pair of extensions.\\
In our first result, we relate $\C_{\mathrm{CP}}(\r)$ to $\C(\r)$.
\begin{lemma}\label{lemperc1}
We have that
\[
\bP\left(T({\r})=\infty\right)= \bP\left(\left|\C(\r)\right|=\infty\right) = \bP\left(\left|\C_{\mathrm{CP}}(\r)\right|=\infty\right).
\]
\end{lemma}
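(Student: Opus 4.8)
The plan is to prove the two equalities separately, establishing that each of the three events coincides with the event that $\X$ never returns to the root, up to $\bP$-null sets.

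\textbf{First equality.} The identity $\bP(T({\r})=\infty)=\bP(|\C(\r)|=\infty)$ should follow from the 0-1 law alluded to in Section \ref{dichotomy}, together with a direct topological argument. On the event $\{T({\r})=\infty\}$, the walk $\X$ is a self-avoiding-at-the-root path that never comes back, so it must traverse at least one edge of every cutset $\pi\in\Pi$ before (never) returning; hence $\C(\r)$ contains, for each $n$, an edge at generation $n$ (take $\pi=E_n$), so $|\C(\r)|=\infty$. Conversely, if $|\C(\r)|=\infty$, then since $\Gcal$ is locally finite and $\C(\r)\cup\{\r\}$ is connected (every edge in $\C(\r)$ lies on a path from $\r$ all of whose edges were crossed before $T({\r})$), König's lemma gives an infinite ray all of whose edges are crossed before returning to $\r$; but crossing infinitely many edges before time $T({\r})$ forces $T({\r})=\infty$. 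So the two events are literally equal as subsets of $\Omega$.

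\textbf{Second equality.} For $\bP(|\C(\r)|=\infty)=\bP(|\C_{\mathrm{CP}}(\r)|=\infty)$ I would use Rubin's construction from Section \ref{sec:ext}. The key observation is that on the event $\{T(e^+)<T({\r})\}$ for the full walk $\X=\X^{(\Gcal)}$, the walk restricted to the path $[\r,e^+]$ behaves exactly like the extension $\X^{(e)}$ up to the relevant hitting times, so $T(e^+)<T({\r})$ implies $T^{(e)}(e^+)<T^{(e)}({\r})$, i.e.~$\C(\r)\subseteq\C_{\mathrm{CP}}(\r)$ pathwise — giving $\bP(|\C(\r)|=\infty)\le\bP(|\C_{\mathrm{CP}}(\r)|=\infty)$ immediately. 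For the reverse inequality, suppose $|\C_{\mathrm{CP}}(\r)|=\infty$; by König's lemma (using local finiteness and the fact that $\C_{\mathrm{CP}}(\r)$ is ``ray-closed'': if $e\in\C_{\mathrm{CP}}(\r)$ and $g<e$ then $g\in\C_{\mathrm{CP}}(\r)$, which one checks from the definition of $T^{(e)}$ versus $T^{(g)}$) there is an infinite ray $\xi=(\xi_n)$ with $T^{(\xi_n)}(\xi_n^+)<T^{(\xi_n)}({\r})$ for all $n$. Along this ray, one then argues that the full walk $\X$ also follows $\xi$ and never returns: using the coupling through the common clocks ${\bf Y}$, the first edge that $\X$ traverses out of $\r$ and the successive forward steps along $\xi$ are governed by the same $\argmin$ of sums $\sum Y/r$ as for the extensions, so by induction $\X$ hits each $\xi_n^+$ before returning to $\r$, whence $T({\r})=\infty$, i.e.~$|\C(\r)|=\infty$.

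\textbf{Main obstacle.} The delicate point is the reverse inclusion in the second equality: the extensions $\X^{(e)}$ for varying $e$ are not literally consistent with each other (the extension on $[\r,e^+]$ ``sees'' a different tree than the extension on $[\r,g^+]$ for $g<e$, because the reinforcement structure at the endpoint differs), so one cannot simply glue them into a single ray-following trajectory of $\X$ by fiat. The argument has to exploit that along a ray all steps are \emph{forward} moves or returns toward $\r$, and that the clocks $Y(\nu,\mu,i)$ driving the forward direction at each vertex are shared across all these processes and across $\X$ itself; one must check carefully that the event ``$\X^{(\xi_n)}$ reaches $\xi_n^+$ before $\r$ for all $n$'' is, on the relevant clock configurations, the same event as ``$\X$ reaches $\xi_n^+$ before returning to $\r$ for all $n$''. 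I expect this to require a careful induction on generations, comparing the running sums of clock ratios for $\X$ and for the extensions at each vertex visited, and this is where most of the real work lies; the rest (König's lemma applications, ray-closedness of the percolation clusters, the 0-1 law input) is routine.
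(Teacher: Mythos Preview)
Your first equality and the pathwise inclusion $\C(\r)\subseteq\C_{\mathrm{CP}}(\r)$ are correct and essentially match the paper. The reverse direction of the second equality is where you diverge, and there you both overcomplicate matters and make a claim that does not hold.

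The assertion ``by induction $\X$ hits each $\xi_n^+$ before returning to $\r$'' is false in general: the full walk may leave the ray $\xi$ at some branching vertex and escape to infinity through a different part of the tree, giving $T(\xi_n^+)=T(\r)=\infty$ for all large $n$. More basically, your proposed mechanism for the induction is wrong already at step one: the first move of $\X$ out of $\r$ is the $\argmin$ over \emph{all} neighbours of $\r$, not just $\xi_1^+$, so it need not coincide with the first step of any path extension $\X^{(\xi_n)}$. The full walk and the extensions do not take the same steps; what Rubin's construction guarantees (this is the content of the remark in Section~\ref{sec:ext}) is only that the \emph{trace} of $\X$ on $[\r,e^+]$, i.e.\ the subsequence of edge-crossings lying in that path, coincides with the sequence of steps of $\X^{(e)}$. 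Trying to match individual steps, as you propose, cannot work.

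The paper's argument bypasses K\"onig's lemma, ray-closedness, and the induction altogether. Given $|\C_{\mathrm{CP}}(\r)|=\infty$, local finiteness provides for each $n$ an edge $e$ with $|e|=n$ and $T^{(e)}(e^+)<T^{(e)}(\r)$. The trace property then yields directly the dichotomy: either $T(e^+)<T(\r)$, or $\X$ makes too few steps on $[\r,e^+]$ to realise either event, which means it has escaped to infinity off the path and $T(e^+)=T(\r)=\infty$. Either way $\X$ reaches generation $n$ before $T(\r)$, and since $n$ is arbitrary, $T(\r)=\infty$ and hence $|\C(\r)|=\infty$. The ``main obstacle'' you flagged evaporates once you apply the trace property to a single edge at each level rather than trying to glue all the extensions together along an infinite ray.
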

\begin{proof}
It is easy to see that a.s.~$\left\{\left|\C(\r)\right|=\infty\right\}=\left\{T({\r})=\infty\right\}$. It remains to prove that a.s.~$\left\{\left|\C_{\mathrm{CP}}(\r)\right|=\infty\right\}=\left\{\left|\C(\r)\right|=\infty\right\}$. We split the proof of this into two parts, by showing a double inclusions.
\begin{itemize}
\item If  $\left|\C_{\mathrm{CP}}(\r)\right|=\infty$ then, for any $n\ge0$, there exists an edge $e$ with $|e|=n$ such that $T^{\ssup e}(e^+)<T^{\ssup e}({\r})$. In this case, either $T(e^+)=T({\r})=\infty$, which means that $\X$ escapes to infinity as it cannot stay forever in any bounded subtree, or $T(e^+)<T({\r})$. Either way, $\X$ hits some vertex at level $n$ before returning to ${\r}$, for any $n\ge 0$. This proves that $\left\{\left|\C_{\mathrm{CP}}(\r)\right|=\infty\right\}\subset\left\{\left|\C(\r)\right|=\infty\right\}$ almost surely.\\
\item If $\left|\C(\r)\right|=\infty$, then, for any $n\ge0$, there exists an edge $e$ with $|e|=n$ such that $T(e^+)<T({\r})$ and thus $T^{\ssup e}(e^+)<T^{\ssup e}({\r})$. This proves that $\left\{\left|\C(\r)\right|=\infty\right\}\subset\left\{\left|\C_{\mathrm{CP}}(\r)\right|=\infty\right\}$ almost surely.
\end{itemize}
\hfill
\end{proof}

For simplicity, for a vertex $v\in V$, we write $v \in\C_{\mathrm{CP}}(\r)$ if one of the edges incident to $v$ is in $\C_{\mathrm{CP}}(\r)$. Besides, recall that for two edges $e_1$ and $e_2$, their common ancestor  with highest generation is the vertex denoted $e_1\wedge e_2$.

\begin{lemma}\label{lemmaquasi}
Assume that \eqref{maincond} holds.
The correlated percolation induced by $\C_{\mathrm{CP}}(\r)$ is \emph{quasi-independent}, i.e.~there exists a constant $C_Q\in(0,\infty)$ such that, for any two edges $e_1,e_2\in E$ with common ancestor $e_1\wedge e_2$, we have that
\begin{align*}
\bP\big(\left.e_1,e_2\in\C_{\mathrm{CP}}(\r)\right|e_1\wedge e_2\in \C_{\mathrm{CP}}(\r)\big)\le& C_Q\bP\big(\left.e_1\in\C_{\mathrm{CP}}(\r)\right|e_1\wedge e_2\in \C_{\mathrm{CP}}(\r)\big)\\
&\times \bP\big(\left.e_2\in\C_{\mathrm{CP}}(\r)\right|e_1\wedge e_2\in \C_{\mathrm{CP}}(\r)\big).
\end{align*}
\end{lemma}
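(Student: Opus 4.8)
The plan is to exploit the continuous-time embedding via Rubin's construction from Section~\ref{sec:ext}, which turns the statement into a computation about independent exponential clocks restricted to disjoint subtrees. Write $v=e_1\wedge e_2$, and let $g_1,g_2$ be the edges incident to $v$ lying on the path toward $e_1^+$ and $e_2^+$ respectively (these may coincide with $e_1$ or $e_2$, and if $g_1=g_2$ the claim is immediate with $C_Q=1$, so assume $g_1\neq g_2$). The first step is to condition on the history of $\X^{\ssup{v}}$ up to the (a.s.\ finite, on the relevant event) time it first reaches $v$; on $\{v\in\C_{\mathrm{CP}}(\r)\}$ we may further restrict attention to what happens after $X$ reaches $v$. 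By the independence of extensions on edge-disjoint subtrees noted at the end of Section~\ref{sec:ext}, once we are sitting at $v$ the future evolution that decides whether $e_1$ is reached before returning to $\r$ and whether $e_2$ is reached before returning to $\r$ are governed by \emph{two families of clocks that overlap only on the clocks attached to the vertex $v$ itself} (the clocks $Y(v,\cdot,\cdot)$ for the children of $v$). So the only source of correlation is the competition, at $v$, among the finitely many outgoing edges.

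The second step is to make this overlap quantitative. Decompose each of the events $\{e_i\in\C_{\mathrm{CP}}(\r)\}$ (given $v\in\C_{\mathrm{CP}}(\r)$) according to the value $k_i$ of the local time of the oriented edge $(v,g_i^+)$ accumulated before the walk (restricted to $[\r,e_i^+]$) either reaches $e_i^+$ or returns to $\r$. Conditionally on these local-time counts, the events become functions of \emph{disjoint} clock families and hence are independent; what remains is to compare $\bP(k_1=a,k_2=b\mid v\in\C_{\mathrm{CP}}(\r))$ with the product $\bP(k_1=a\mid\cdot)\bP(k_2=b\mid\cdot)$. Using Rubin's construction, crossing the oriented edge $(v,g_i^+)$ for the $j$-th time requires the minimal exponential clock at $v$ (rescaled by $r(v,g_i^+,j-1)$) to beat all the other outgoing clocks at $v$; the standard exponential-race computation shows that each such ``win'' contributes an independent factor, and the two counts $k_1,k_2$ are, up to a bounded multiplicative distortion coming from the normalisation $\sum_{\mu\sim v} r(v,\mu,\cdot)$, governed by independent geometric-type variables. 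Condition~\eqref{maincond} enters precisely here: it guarantees that the relevant ``success probabilities'' at $v$ (the ratio $\delta_{g_i}^{-1}$, resp.\ $w_{g_i}^{-1}$, over the total outgoing weight, with the total weight changing as edges switch from initial to reinforced) stay comparable up to a factor depending only on $M$, so the joint law of $(k_1,k_2)$ is within a constant $C_Q=C_Q(M)$ of the product of the marginals. Assembling: sum over $a,b$, pull out the constant, and recombine the product using the conditional independence given the local times.

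The main obstacle is the second step: tracking how the transition probabilities at $v$ evolve as the various outgoing edges of $v$ switch from their initial weights $w$ to their reinforced weights $\delta$, and showing that the dependence between $k_1$ and $k_2$ induced by this common, time-varying denominator is controlled by a single constant. One clean way to handle this is to enlarge the conditioning to include the \emph{entire sequence of jumps out of $v$ along the other children} (all children except $g_1$ and $g_2$), and to observe that, conditionally on that data together with the event $v\in\C_{\mathrm{CP}}(\r)$, the pairs of clocks controlling $(e_1$ reached$)$ and $(e_2$ reached$)$ really are independent; one then only needs to remove this extra conditioning at the cost of a bounded factor, which is exactly what \eqref{maincond} (equivalently, the quasi-independence formulation the remark alludes to) buys. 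Everything else is the routine exponential-race algebra already implicit in \cite{CHK}, combined with \eqref{eqhit1}--\eqref{eqhit} to identify the conditional probabilities with products of the $\psi(g)$'s along the relevant path segments.
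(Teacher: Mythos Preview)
Your plan has a genuine gap in how it locates the source of correlation and, consequently, in the decomposition you propose.

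Recall that $\C_{\mathrm{CP}}(\r)$ is defined through the one-dimensional extensions $\X^{(e_i)}$ on the paths $[\r,e_i^+]$, not through the full walk. In $\X^{(e_i)}$ the vertex $v=e_1\wedge e_2$ has exactly two neighbours: the parent $e^-$ (where $e^+=v$) and the single child $g_i^+$. There are no ``other children of $v$'' to condition on, and the clocks $Y(v,g_i^+,\cdot)$ are \emph{not} shared between the two extensions. What \emph{is} shared is the entire family of clocks along the whole segment $[\r,v]$, in particular the downward clocks $Y(v,e^-,\cdot)$ together with everything determining the excursions below $v$. So the assertion that ``the two families of clocks overlap only on the clocks attached to the vertex $v$ itself (the clocks $Y(v,\cdot,\cdot)$ for the children of $v$)'' is incorrect, and the ensuing decomposition by the local times $(k_1,k_2)$ does not yield conditional independence: each $k_i$ already depends on the shared clocks on $[\r,v]$, and so does the outcome ``reach $e_i^+$'' versus ``return to $\r$''. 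Comparing the joint law of $(k_1,k_2)$ to the product of marginals is therefore not a bounded-ratio statement about independent geometric-type variables with a common denominator; it hides exactly the dependence you need to control.

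The paper's route is different and avoids this. Let $L(e)$ be the total clock time consumed on the oriented edge $(v,e^-)$ before the extension returns to $\r$ after reaching $v$, and let $L^*(e_i)$ be the total clock time on $(v,g_i^+)$ needed until the extension on $[v,e_i^+]$ hits $e_i^+$. Then $L(e),L^*(e_1),L^*(e_2)$ are mutually independent (disjoint clock families), and on $\{v\in\C_{\mathrm{CP}}(\r)\}$ one has $\{e_i\in\C_{\mathrm{CP}}(\r)\}=\{L(e)>L^*(e_i)\}$. The crucial structural fact is that $L(e)$ is a \emph{single exponential} with rate $p=\big(\sum_{g\le e}\delta_g^{-1}\big)^{-1}$, so
\[
\bP\big(L(e)>L^*(e_1)\vee L^*(e_2)\big)=\E\big[e^{-p(L^*(e_1)\vee L^*(e_2))}\big]\le \E\big[e^{-(p/2)L^*(e_1)}\big]\,\E\big[e^{-(p/2)L^*(e_2)}\big],
\]
using $x_1\vee x_2\ge(x_1+x_2)/2$. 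Each factor equals $\bP(\widetilde L(e)>L^*(e_i))$ for an exponential $\widetilde L(e)$ with rate $p/2$, i.e.\ the conditional probability one would get after doubling all reinforced weights $\delta_g$, $g\le e$. Condition~\eqref{maincond} is then used not to control ``success probabilities at $v$'' but in a telescoping estimate comparing $\prod_{e<g\le e_i}\widetilde\psi(g)$ to $\prod_{e<g\le e_i}\psi(g)$; the ratio is bounded by $\exp(M^3)$ uniformly in $e,e_i$. This is the step your outline is missing, and it is where the actual work lies.
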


\begin{proof}
%
%
Recall the construction of Section \ref{sec:ext}. Note that if $e_1\wedge e_2=\r$, then  the extensions on $[  \r,e_1^+]$ and on $[  \r,e_2^+]$ are independent, as they are defined by two disjoint collections of exponential clocks, and the conclusion of the lemma holds with $C_Q=1$ by independence.\\
Now, assume that $e_1\wedge e_2\neq\r$ and note that the extensions on $[  \r,e_1^+]$ and on $[  \r,e_2^+]$ are dependent as they use the same exponential clocks on the path $[  \r,e_1\wedge e_2]$. Recall the definition of the processes $Y$, from Section  \ref{sec:ext}. Denote by  $e$ the unique edge such that $e^+=e_1\wedge e_2$ and define
\begin{align*}
N(e)&= \left|\left\{0\le n\le T^{\ssup{e}}( {\r})\circ\theta_{T^{\ssup{e}}(e^+)}: (X^{\ssup{e}}_n,X^{\ssup{e}}_{n+1})=(e^+,e^-)\right\}\right|,\\
L(e)&=\sum_{j=0}^{N(e)-1}\frac{Y(e^+, e^-, j)}{\delta_e},
\end{align*}
where $|A|$ denotes the cardinality of a set $A$,  and $\theta$ is the canonical shift on trajectories.
So that $L(e)$ is the time consumed by the clocks attached to the oriented edge $(e^+,e^{-})$ before $\X^{\ssup{e}}$, $\X^{\ssup{e_1}}$ or $\X^{\ssup{e_2}}$ goes back to $ {\r}$ once it has reached $e^+$. Recall that these three extensions are coupled and thus the time $L(e)$ is the same for the three of them.\\
For $i\in\{1,2\}$, let $v_i$ be the vertex which is the offspring of $e^+$ lying the path from $  \r$ to $e_i$. Note that $v_i$ could be equal to $e_i^+$.
As before, let us define, for $i\in \{1,2\}$,
\begin{align*}
{N^*}(e_i)&= \left|\left\{0\le n\le T^{\ssup{e_i}}(e_i^+): (X^{[e^+,{e_i^+}]}_n,X^{[e^+,{e_i^+}]}_{n+1})=(e^+,v_i)\right\}\right|,\\
L^*(e_i)&=\frac{Y(e^+, v_i, 0)}{w_{(e^+, v_i)}}+\sum_{j=1}^{N^*(e_i)-1}\frac{Y(e^+, v_i, j)}{\delta_{(e^+, v_i)}}.
\end{align*}
Here, $L^*(e_i)$, $i\in\{1,2\}$, is the time consumed by the clocks attached to the oriented edge $(e^+,v_i)$ before $\X^{\ssup{e_i}}$, or $\X^{[e^+,e_i^+]}$,   hits $e_i^+$.\\
Note that the three quantities $L(e)$, $L^*(e_1)$ and $L^*(e_2)$ are independent as they are defined by {three disjoint, and hence independent,} sets of exponential random variables $Y(\cdot,\cdot,\cdot)$. Moreover, we have
\[
\left\{e_1,e_2\in\C_{\mathrm{CP}}(\r)\right\}=\{T^{(e)}(e^+)<T^{(e)}( {\r})\}\cap\{L(e)>L^*(e_1)\}\cap \{L(e)>L^*(e_2)\}.
\]
\begin{figure}[h]   
\includegraphics{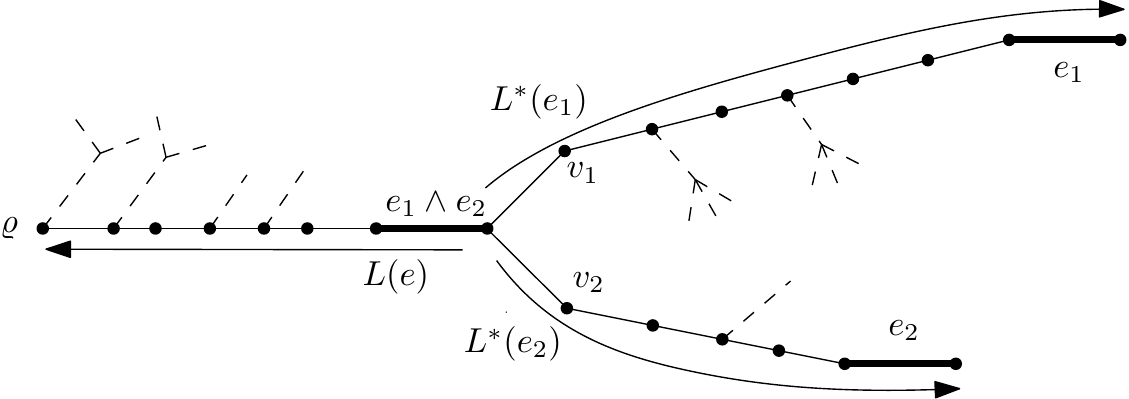}
   \caption{\label{arbre} Representation of $L(e)$, $L^*(e_1)$ and $L^*(e_2)$.}
\end{figure}
Now, note that the random variable $N(e)$ is simply a geometric random variable (counting the number of trials) with success probability
$\delta_e^{-1}/\sum_{g\le e} \delta_g^{-1}$,
and that also holds when conditioned on the event $\{T^{(e)}(e^+)<T^{(e)}( {\r})\}$. Moreover, $N(e)$ is easily seen to be independent of the clocks $Y(e^+,e^-,\cdot)$. Thus, $L(e)$ is simply a geometric sum of i.i.d.~exponential random variables with parameter $\delta_e$. Therefore, $L(e)$ is an exponential random variables with parameter
\begin{align} \label{param}
p:=\frac{1}{\sum_{g\le e} \delta_g^{-1}}.
\end{align}
We cannot draw the same conclusion for $L^*(e_1)$ and $L^*(e_2)$, but we know that they are two continuous random variables as they are a random sum of independent exponential random variables. Let us denote $f_1$ and $f_2$ respectively the densities of $L^*(e_1)$ and $L^*(e_2)$. Then, we have that 
\begin{align}\nonumber
&\bP\left(\left.e_1,e_2\in\C_{\mathrm{CP}}(\r)\right|e_1\wedge e_2\in \C_{\mathrm{CP}}(\r)\right)\\\nonumber
=&\bP\left(L(e)>L^*(e_1) \vee L^*(e_2)\right)\\\nonumber
=&\int_0^\infty\int_0^\infty\int_{x_1\vee x_2}^\infty p\;{\exp\{-pt\}} f_1(x_1)f_2(x_2)dtdx_1dx_2\\\nonumber
=&\int_0^\infty\int_0^\infty {\exp\{-p(x_1\vee x_2)\}} f_1(x_1)f_2(x_2)dx_1dx_2\\\nonumber
\le&\int_0^\infty \int_{0}^\infty {\exp\{-\frac{p}{2}(x_1+x_2)\}} f_1(x_1)f_2(x_2)dx_2dx_1,\\\nonumber
\end{align}
where we used that $x_1\vee x_2\ge (x_1+x_2)/2$.
We can then write the last integral as a product, which yields
\begin{equation} \label{intprod}
\begin{aligned}
&\bP\left(\left.e_1,e_2\in\C_{\mathrm{CP}}(\r)\right|e_1\wedge e_2\in \C_{\mathrm{CP}}(\r)\right)\\
&\le \left(\int_0^\infty {\exp\{-px_1/2\}} f_1(x_1)dx_1\right)\cdot \left(\int_0^\infty {\exp\{-px_2/2\}} f_2(x_2)dx_2\right).
\end{aligned}
\end{equation}
We describe in detail how to treat the first integral appearing in the right-hand side of  \eqref{intprod} in the last product. The way to deal with the  second one is  identical. First, note that
\[
\int_0^\infty {\exp\{-px_1/2\}} f_1(x_1)dx_1=\bP\left(\widetilde{L}(e)>L^*(e_1)\right),
\]
where $\widetilde{L}(e)$ is an exponential variable with parameter $p/2$. Now, given the particular form \eqref{param} of $p$, $\widetilde{L}(e)$ has the same law as $L(e)$ where we replace  the weights $\delta_g$, for $g\le e$ {\it only}, by $\delta_g/2$, $g\le e$ and keep the other weights the same.
Let $\widetilde{\psi}(g)$, for $e<g\le e_1$, have the same definition as $\psi$ but where we replace  the weights $\delta_g$ by $\delta_g/2$ for $g\le e$ {\it only}. First, we obtain
\begin{align*}
\bP\left(\widetilde{L}(e)>L^*(e_1)\right)&=\prod_{e<g\le e_1}\widetilde{\psi}(g)=\prod_{e<g\le e_1}\frac{2p^{-1}+\sum_{e<v<g}\delta_v^{-1}}{2p^{-1}+\sum_{e<v<g}\delta_v^{-1}+w_g^{-1}}\\
&=\bP\left({L}(e)>L^*(e_1)\right)\prod_{e<g\le e_1}\left( 1+\frac{p^{-1}}{p^{-1}+\sum_{e<v<g}\delta_v^{-1}} \right)\\
&\qquad \times\left(1-\frac{p^{-1}}{2p^{-1}+\sum_{e<v<g}\delta_v^{-1}+w_g^{-1}}\right)\\
&=\bP\left({L}(e)>L^*(e_1)\right)\\
&\times\prod_{e<g\le e_1}\left( 1+\frac{p^{-1}w_g^{-1}}{\left(p^{-1}+\sum_{e<v<g}\delta_v^{-1}\right)\left(2p^{-1}+\sum_{e<v<g}\delta_v^{-1}+w_g^{-1}\right)} \right).
\end{align*}
Our goal is to control the last term in the last display.
Recalling that \eqref{maincond} holds for some constant $M\in(1,\infty)$, one can compute
\begin{align*}
& \prod_{e<g\le e_1}\left( 1+\frac{p^{-1}w_g^{-1}}{\left(p^{-1}+\sum_{e<v<g}\delta_v^{-1}\right)\left(2p^{-1}+\sum_{e<v<g}\delta_v^{-1}+w_g^{-1}\right)} \right)\\
&\le \exp\left({p^{-1}}\sum_{e<g\le e_1} \frac{w_g^{-1}}{\left(\sum_{v< g}\delta_v^{-1}\right)\left(\sum_{v< g}\delta_v^{-1}+w_g^{-1}\right)}\right)\\
&\le \exp\left(p^{-1}M^2\sum_{e<g\le e_1} \frac{w_g^{-1}}{\left(\sum_{v\le g}w_v^{-1}\right)\left(\sum_{v< g}w_v^{-1}\right)}\right)\\
&= \exp\left(p^{-1}M^2\sum_{e<g\le e_1} \frac{\sum_{v\le g}w_v^{-1}-\sum_{v< g}w_v^{-1}}{\left(\sum_{v\le g}w_v^{-1}\right)\left(\sum_{v< g}w_v^{-1}\right)}\right)\\
&= \exp\left(p^{-1}M^2\left(\sum_{e\le g< e_1} \frac{1}{\sum_{v\le g}w_v^{-1}}-\sum_{e< g\le e_1} \frac{1}{\sum_{v\le g}w_v^{-1}}\right)\right)\\
&\le \exp\left(p^{-1}M^2\frac{1}{\sum_{v\le e}w_v^{-1}}\right) \le\exp\left(M^3\right).
\end{align*}

We thus have proved that
\[
\int_0^\infty \exp\{-px_1/2\} f_1(x_1)dx_1\le \exp\{M^3\} \bP\left(\left.e_1\in\C_{\mathrm{CP}}(\r)\right|e_1\wedge e_2\in \C_{\mathrm{CP}}(\r)\right).
\]
In the exact same manner, one can prove that 
\[
\int_0^\infty \exp\{-px_2/2\} f_2(x_2)dx_2\le \exp\{M^3\} \bP\left(\left.e_2\in\C_{\mathrm{CP}}(\r)\right|e_1\wedge e_2\in \C_{\mathrm{CP}}(\r)\right).
\]
The two last displays together with \eqref{intprod} provide the conclusion.
\hfill
\end{proof}

\section{Transience in Theorem \ref{mainth}: the case ${RT(\mathcal{G}, \X)}>1$} \label{sec:transience}

First, let us give a bound for the escape probability in terms of some effective conductance. For this purpose, we need to introduce the following modified conductances. Recall the definitions \eqref{defpsi} and \eqref{defPsi} of $\psi(\cdot)$ and $\Psi(\cdot)$, and recall that $\psi(e)=1$ for any edge $e$ such that $|e|=1$, i.e.~$e$ is incident to $\r$.
\begin{definition} \label{defmodcond1}
For any edge $e\in E$, let $c(e)=1$  if $|e|=1$ and, if $|e|>1$,   define 
\begin{align}\label{modcond}
{c}(e) = \frac 1{1- \psi(e)}   \Psi(e).
\end{align}
Define $\Ccal_{\rm eff}$ the effective conductance of $\Gcal$ when  the conductance $c(e)$ is assigned to every edge $e\in E$.  For a definition of effective conductance, see \cite{LP} page 27.
\end{definition}

Recall that $T( {\r})$ be the first time ${\bf X}$ returns to $ {\r}$, i.e.~$T( {\r})=\inf\{n>0:X_n=  \r\}$.

\begin{proposition}\label{proplowerboundperco}
Let ${\bf X}$ be a \Go, as defined in Section \ref{sectionmodel}, with parameters $(\delta_e,w_e)_{e\in E}$ on some tree $\mathcal{G}$. If \eqref{maincond} holds, then there exists $C_Q\in(0,\infty)$ such that
\[
\frac{1}{C_Q}\times\frac{\Ccal_{\rm eff}}{1+\Ccal_{\rm eff}} \le \bP(T( {\r}) = \infty). 
\]
\end{proposition}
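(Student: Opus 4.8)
The plan is to connect the escape probability to the percolation cluster $\C_{\mathrm{CP}}(\r)$ and then to the effective conductance via a second-moment argument, exploiting the quasi-independence established in Lemma \ref{lemmaquasi}. First I would recall from Lemma \ref{lemperc1} that $\bP(T(\r)=\infty)=\bP(|\C_{\mathrm{CP}}(\r)|=\infty)$, so it suffices to lower-bound the probability that the correlated percolation cluster of the root is infinite. The natural route is to show that for each cutset $\pi\in\Pi$ (or rather along an exhausting sequence of finite truncations of the tree), the expected number of edges of $\pi$ in $\C_{\mathrm{CP}}(\r)$ and its second moment are comparable, and then apply the Paley--Zygmund / second-moment inequality to get a uniform lower bound on the probability that $\pi\cap\C_{\mathrm{CP}}(\r)\neq\varnothing$; letting the truncation exhaust $\mathcal{G}$ and using compactness then gives $\bP(|\C_{\mathrm{CP}}(\r)|=\infty)>0$ with the right quantitative bound.

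The key computation is that $\bP(e\in\C_{\mathrm{CP}}(\r))=\Psi(e)$ by \eqref{eqhit}, and that conditionally on the common ancestor $e_1\wedge e_2=e^+$ being in the cluster, $\bP(e_1\in\C_{\mathrm{CP}}(\r)\mid e^+\in\C_{\mathrm{CP}}(\r))$ equals the product of one-dimensional ruin probabilities along the path from $e^+$ to $e_i^+$, i.e.\ $\Psi(e_i)/\Psi(e)$ up to the constant involving $\psi(e)$. This is exactly where the modified conductances $c(e)=\frac{1}{1-\psi(e)}\Psi(e)$ from Definition \ref{defmodcond1} enter: the factor $1/(1-\psi(e))$ accounts for the number of returns to $e^-$ before the walk on the path finally escapes through $e$, so that the pairwise correlation structure of the percolation matches the ``electrical network'' associated to the conductances $c(\cdot)$. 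Concretely, I would fix a finite sub-tree $\mathcal{G}_n$ (the tree truncated at generation $n$, with leaves $\partial_n$), consider the random variable $W_n=\sum_{v\in\partial_n}\1_{v\in\C_{\mathrm{CP}}(\r)}\,\frac{c(\text{edge into }v)}{(\text{some normalization})}$ — more precisely a weighted count designed so that $\bE[W_n]$ is essentially the unit flow value and $\bE[W_n^2]\le C_Q\cdot(\text{flow energy})$ by Lemma \ref{lemmaquasi} — and invoke the classical first/second moment bound (as in Lyons' proof of the tree percolation threshold, \cite{L90}, \cite{LP} Chapter 5): $\bP(W_n>0)\ge \bE[W_n]^2/\bE[W_n^2]$, which after taking the optimal flow becomes $\ge \frac{1}{C_Q}\cdot\frac{\Ccal_{\rm eff}(\mathcal{G}_n)}{1+\Ccal_{\rm eff}(\mathcal{G}_n)}$ or a similar expression. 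Letting $n\to\infty$, $\Ccal_{\rm eff}(\mathcal{G}_n)\uparrow\Ccal_{\rm eff}$, and since $\{W_n>0\}$ are (essentially) decreasing events whose intersection is $\{|\C_{\mathrm{CP}}(\r)|=\infty\}$, we conclude $\bP(T(\r)=\infty)\ge \frac{1}{C_Q}\cdot\frac{\Ccal_{\rm eff}}{1+\Ccal_{\rm eff}}$.

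The main obstacle I expect is twofold. First, one must verify carefully that conditioning on $e_1\wedge e_2\in\C_{\mathrm{CP}}(\r)$ and the resulting conditional one-step/pairwise probabilities reproduce exactly the conductance ratios $c(e_i)/c(e_1\wedge e_2)$ — this requires unwinding the ruin-probability interpretation \eqref{eqhit1}--\eqref{eqhit} together with the factor $1/(1-\psi(e))$, and Lemma \ref{lemmaquasi} only provides the inequality $\le C_Q\cdot(\text{product})$, so the second-moment estimate must be arranged so that only this inequality (not an equality) is needed. Second, there is a bookkeeping subtlety in passing from the correlated percolation to effective conductance of a network with conductances $c(\cdot)$: one must choose the right test flow (the unit current flow minimizing energy) and match $\sum_e \Psi(e)^{\text{-ish}}$-type sums to the energy $\sum_e f(e)^2/c(e)$, then identify the resulting bound with $\Ccal_{\rm eff}/(1+\Ccal_{\rm eff})$ via the standard relation between escape probability and effective conductance (the ``$+1$'' coming from the unit conductance implicitly at the root, as in the Nash-Williams / commute-time identities). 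Once these identifications are in place, the argument is the standard second-moment percolation-on-trees argument adapted to quasi-independent percolation, and the rest is routine.
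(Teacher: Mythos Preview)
Your proposal is correct and is essentially the same route the paper takes: the paper's proof is a single sentence that combines Lemma~\ref{lemperc1} and Lemma~\ref{lemmaquasi} and then invokes Theorem~5.19 of \cite{LP}, which is precisely the second-moment / energy lower bound for quasi-independent percolation on trees that you have outlined in detail. The bookkeeping concerns you raise (matching the conditional ruin probabilities to the conductances $c(e)$, and the appearance of the factor $\Ccal_{\rm eff}/(1+\Ccal_{\rm eff})$) are exactly what that cited theorem packages up, so rather than being obstacles they are the already-proved content of the black box the paper appeals to.
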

\begin{proof}[Proof of Proposition \ref{proplowerboundperco}]
From Lemma \ref{lemperc1} and Lemma~\ref{lemmaquasi}, we can use the lower-bound  in Theorem  5.19 (page 145) of \cite{LP} to obtain the result.\hfill
\end{proof}

Recall that a flow $(\theta_e)$  on a tree is a nonnegative function on $E$ such that, for any $e\in E$, $\theta_e=\sum_{g\in E:g^-=e^+} \theta_g$. A flow is said to be a unit flow if moreover $\sum_{e:|e|=1}\theta_{e}=1$. The following statement is a simple consequence of previous remarks and classical results.

\begin{lemma}\label{simple}
Assume that \eqref{maincond} is satisfied.
Consider the tree $\mathcal{G}$ with the conductances defined in Definition \ref{defmodcond1} and assume that there exists a unit flow $(\theta_e)_{e\in E}$ on $\mathcal{G}$ from $ {\r}$ to infinity which has a finite energy, that is
\[
\sum_{e\in E}\frac{\left(\theta_e\right)^2}{c(e)}<\infty.
\]
Then $\X$ is transient.
\end{lemma}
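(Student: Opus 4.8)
The plan is to deduce Lemma~\ref{simple} directly from Proposition~\ref{proplowerboundperco} together with the classical variational characterisation of effective conductance on a tree (Thomson's principle / the ``min-energy'' description of resistance). First I would recall that, by Thomson's principle (see \cite{LP}, Chapter~2), for a network with conductances $(c(e))_{e\in E}$ the effective conductance $\Ccal_{\rm eff}$ from $\r$ to infinity satisfies
\[
\Ccal_{\rm eff} = \inf\left\{ \sum_{e\in E}\frac{(\theta_e)^2}{c(e)} : (\theta_e)\text{ a unit flow from }\r\text{ to }\infty\right\}^{-1},
\]
with the convention that if there is a unit flow of finite energy then the infimum is finite and hence $\Ccal_{\rm eff}>0$. (Equivalently: a transient network is exactly one admitting a unit flow of finite energy; on a tree this is particularly transparent.)

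Given the hypothesis of the lemma, we are handed a unit flow $(\theta_e)_{e\in E}$ with $\sum_e (\theta_e)^2/c(e)<\infty$, so by the displayed identity $\Ccal_{\rm eff}>0$, hence $\Ccal_{\rm eff}/(1+\Ccal_{\rm eff})>0$. Since \eqref{maincond} is assumed, Proposition~\ref{proplowerboundperco} applies and yields a constant $C_Q\in(0,\infty)$ with
\[
\bP(T(\r)=\infty)\ \ge\ \frac{1}{C_Q}\cdot\frac{\Ccal_{\rm eff}}{1+\Ccal_{\rm eff}}\ >\ 0,
\]
which is precisely the statement that $\X$ is transient (by the definition of transience in Section~\ref{secdefmodel}, and, if one wants the stronger ``visits every vertex finitely often a.s.'' formulation, by the $0$--$1$ law proved in Section~\ref{dichotomy}). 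That completes the argument.

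There is essentially no hard step here: the content was already packaged into Proposition~\ref{proplowerboundperco} (whose real work is the quasi-independence Lemma~\ref{lemmaquasi} and the percolation comparison of Theorem~5.19 in \cite{LP}), and the remaining ingredient is the standard fact that finite-energy unit flow $\iff$ positive effective conductance. The only thing to be a little careful about is bookkeeping: checking that the conductances $c(e)$ of Definition~\ref{defmodcond1} are exactly the ones used in Proposition~\ref{proplowerboundperco} (they are, by construction), and invoking the correct orientation convention for flows ($\theta_e = \sum_{g:g^-=e^+}\theta_g$, unit flow meaning $\sum_{|e|=1}\theta_e=1$) so that the cited form of Thomson's principle applies verbatim. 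If I wanted to avoid even quoting Thomson's principle, I could instead note that on a tree the existence of a finite-energy unit flow is equivalent, by the Nash-Williams inequality in the easy direction and a direct flow construction in the other, to $\inf_{\pi\in\Pi}\sum_{e\in\pi} c(e)^{-1}$-type finiteness, but quoting \cite{LP} is cleanest.
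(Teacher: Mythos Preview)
Your proof is correct and essentially identical to the paper's own proof: both combine Proposition~\ref{proplowerboundperco} with the standard fact from \cite{LP} that a finite-energy unit flow forces $\Ccal_{\rm eff}>0$ (the paper cites Theorem~2.11 of \cite{LP}, you cite Thomson's principle, which amounts to the same thing). The extra bookkeeping you mention is fine but not needed here.
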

\begin{proof}
Using Proposition \ref{proplowerboundperco},  if $\C_{\rm eff}>0$ then $\X$ is transient. By Theorem 2.11 (page 39) of \cite{LP},  $\C_{\rm eff}>0$ if and only if there exists a unit flow $(\theta_e)_{e\in E}$ on $\mathcal{G}$ from $ {\r}$ to infinity which has a finite energy.
\hfill
\end{proof}

The following result is inspired by Corollary 4.2 of R.~Lyons \cite{L90}, which is a consequence of the max-flow min-cut Theorem. This result will provide us with a sufficient condition for the existence of a unit flow with finite energy.

\begin{proposition} \label{propLyons}
For any collection of positive numbers $(u_e)_{e\in E}$ such that $\sum_{e:|e|=1}u_{e}=1$ and
\begin{align}\label{condcor}
\inf_{\pi\in\Pi}\sum_{e\in\pi} u_{e}c(e)>0,
\end{align}
there exists  a nonzero flow whose energy is upper-bounded by
\[
\lim_{n\to\infty}\max_{e\in E: |e|=n}\sum_{g\le e} u_g.
\]
\end{proposition}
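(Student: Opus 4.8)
\textbf{Proof plan for Proposition \ref{propLyons}.}

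The plan is to invoke the max-flow min-cut theorem in the infinite-tree formulation used by Lyons \cite{L90} (see also \cite{LP}), with edge capacities chosen so that the capacity of a cutset controls the energy we want to bound. First I would set the capacity of edge $e$ to be $\mathrm{cap}(e) = u_e$; wait — that alone will not track the conductances $c(e)$, so instead I would work on the tree as a flow network where the \emph{strength} available through $e$ is linked to $u_e c(e)$, while the quantity $\sum_{g\le e}u_g$ plays the role of a ``potential'' along the path to $e$. Concretely, the cleanest route is: apply the max-flow min-cut theorem to the capacities $\widehat{\mathrm{cap}}(e) := u_e c(e)$. By \eqref{condcor}, $\inf_{\pi\in\Pi}\sum_{e\in\pi}\widehat{\mathrm{cap}}(e) = \inf_{\pi}\sum_{e\in\pi}u_e c(e) =: \kappa > 0$, so the min cut is positive, hence there exists a nonzero flow $(\vartheta_e)_{e\in E}$ from $\r$ to infinity obeying the capacity constraints $0\le \vartheta_e \le u_e c(e)$ for every $e$. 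This $\vartheta$ is the candidate flow.

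Next I would estimate the energy $\sum_{e\in E}\vartheta_e^2/c(e)$ of this flow. Using $\vartheta_e \le u_e c(e)$ we get $\vartheta_e^2/c(e) \le u_e^2 c(e) \le u_e c(e)\cdot u_e$. Summing over a generation and then over all generations, and using the flow (conservation) property to telescope, one should arrive at a bound of the form
\begin{align*}
\sum_{e\in E}\frac{\vartheta_e^2}{c(e)} \le \sum_{e\in E} u_e\,\vartheta_e \le \lim_{n\to\infty}\max_{e:|e|=n}\sum_{g\le e}u_g,
\end{align*}
where the last inequality is the key combinatorial step: one rewrites $\sum_{e\in E}u_e\vartheta_e = \sum_{e\in E}\vartheta_e\sum_{g\le e}(u_g - u_{g'})$-type telescoping along rays, regroups by summing $\vartheta_e$ over the edges at a fixed generation passing through a given ancestor, and uses that $\sum_{e\in E,\,|e|=n,\,g\le e}\vartheta_e \le \vartheta_g \le$ (total outflow) which is controlled because the flow has bounded strength. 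The precise bookkeeping mirrors Corollary 4.2 of \cite{L90}: one partitions by the last edge $g$ on a ray before generation $n$ where $u$ changes, and the weight $\sum_{g\le e}u_g$ appears as the accumulated cost.

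The step I expect to be the main obstacle is the energy estimate itself — specifically, getting the telescoping/regrouping argument to yield exactly the stated bound $\lim_{n\to\infty}\max_{e:|e|=n}\sum_{g\le e}u_g$ rather than something weaker (e.g.\ a $\limsup$, or a bound with an extra multiplicative constant). One has to be careful that the flow $\vartheta$ produced by max-flow min-cut can be taken to be a \emph{unit} flow after normalization (it is nonzero, so divide by its strength) and that the capacity bound $\vartheta_e \le u_e c(e)$ survives this normalization up to the factor $1/\kappa$; tracking where this factor lands, and confirming it does not spoil the clean statement, is the delicate point. Once the flow with finite energy is in hand, transience follows from Lemma \ref{simple} (equivalently, Theorem 2.11 of \cite{LP}), and combined with Proposition \ref{proplowerboundperco} one concludes $\bP(T(\r)=\infty)>0$.
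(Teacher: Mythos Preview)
Your approach is exactly the paper's: apply max-flow min-cut with capacities $u_e c(e)$ to get a nonzero flow $\vartheta$ with $\vartheta_e\le u_e c(e)$, then bound $\vartheta_e^2/c(e)\le u_e\vartheta_e$. Where you go astray is the final combinatorial step and the normalization worry, and these are linked. There is no telescoping of the form $u_g-u_{g'}$; the argument is a direct double-sum swap. Fix $n$ and write, using flow conservation $\vartheta_e=\sum_{g:\,e\le g,\,|g|=n}\vartheta_g$ for each $e$ with $|e|\le n$,
\[
\sum_{k=1}^n\sum_{|e|=k}u_e\vartheta_e=\sum_{|g|=n}\vartheta_g\sum_{e\le g}u_e\le\Big(\max_{|g|=n}\sum_{e\le g}u_e\Big)\sum_{|g|=n}\vartheta_g.
\]
Now the point you missed, which also dissolves your $1/\kappa$ concern: the total flow $\sum_{|g|=n}\vartheta_g=\sum_{|e|=1}\vartheta_e\le\sum_{|e|=1}u_e c(e)=1$, because by Definition~\ref{defmodcond1} one has $c(e)=1$ whenever $|e|=1$, and by hypothesis $\sum_{|e|=1}u_e=1$. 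So the flow produced by max-flow min-cut already has strength at most $1$, no normalization is needed, and no stray $1/\kappa$ appears. Letting $n\to\infty$ gives the stated bound on the energy of the \emph{nonzero} flow $\vartheta$ (the proposition does not claim a unit flow). Your last paragraph about Lemma~\ref{simple} and Proposition~\ref{proplowerboundperco} belongs to the application of the proposition, not its proof.
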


\begin{proof}
If \eqref{condcor} is satisfied, then the max-flow min-cut Theorem (see \cite{LP}, p.~75) implies that there exists a nonzero flow $(\theta_e)$ satisfying $\theta_e\le u_ec(e)$. Then the energy of this flow is the limit as $n$ goes to infinity of the partial sum
\begin{align*}
\sum_{k=1}^n\sum_{e\in E: |e|=k}\frac{\left(\theta_e\right)^2}{c(e)}\le \sum_{k=1}^n\sum_{e\in E: |e|=k}\theta_eu_e.
\end{align*}
Now, notice that, for any $0\le k\le n$ and any $e\in E$ with $|e|=k$, we have that $\theta_e=\sum_{g:e\le g, |g|=n}\theta_g$ and, moreover, $\sum_{g: |g|=n}\theta_g=\sum_{e:|e|=1}\theta_{e}\le  \sum_{e:|e|=1} u_e c(e) =  1$. Therefore, the energy of this flow $(\theta_e)$ is upper-bounded by
\begin{align*}
\lim_{n\to\infty}\sum_{k=1}^n\sum_{e\in E: |e|=k}\frac{\left(\theta_e\right)^2}{c(e)}\le \lim_{n\to\infty} \sum_{e\in E: |e|=n}\theta_e\sum_{g\le e}u_e\le \lim_{n\to\infty}\max_{e\in E: |e|=n}\sum_{g\le e} u_g.
\end{align*}
\hfill
\end{proof}

\begin{proposition}\label{propfunc}
Fix a real number $\lambda>1$. There exists an absolute constant $C_\lambda<\infty$ such that, for any function $f:\mathbb{N}\to[0,1]$  with $f(0)=1$, we have
\begin{align}\label{claim}
\sum_{n=0}^\infty f(n)\prod_{i=1}^n\left(1-f(i)\right)^{\lambda-1}\le C_\lambda.
\end{align}
\end{proposition}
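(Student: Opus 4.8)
\textbf{Proof plan for Proposition \ref{propfunc}.} The plan is to reduce the claim to a uniform bound and to exploit the telescoping-type structure of the summand. Write $P_n = \prod_{i=1}^n (1-f(i))$, with $P_0=1$, so that the $n$-th term of the series is $f(n)P_{n-1}^{\lambda-1}$ for $n\ge1$ (and $f(0)=1$ for $n=0$), where I interpret $P_{-1}$ appropriately; more precisely the $n$-th summand is $f(n)\prod_{i=1}^n(1-f(i))^{\lambda-1}$, and I will compare it with the increments of $P_n^{\lambda-1}$. Two regimes must be handled. If some $f(i)=1$, then $P_n=0$ for all $n\ge i$, so the series has only finitely many nonzero terms and is bounded by $\sum_{n<i} 1\cdot P_{n-1}^{\lambda-1}\le i$; but this is not uniform, so instead I keep $f(i)\in[0,1)$ in mind and treat $f(i)=1$ as a degenerate limit, noting the bound below is continuous in the $f(i)$'s. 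The key inequality is elementary: for $a=1-f(n)\in[0,1]$ and $\lambda>1$, one has $P_{n-1}^{\lambda-1} - P_n^{\lambda-1} = P_{n-1}^{\lambda-1}(1-a^{\lambda-1})$. When $\lambda\ge 2$, $1-a^{\lambda-1}\ge 1-a = f(n)$ is false in general; rather I should use $1-a^{\lambda-1}\ge c_\lambda(1-a)$ only when $\lambda-1\ge 1$. For $\lambda-1<1$ the reverse holds. So the clean route is:

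\textbf{Step 1.} Prove the pointwise bound $f(n)\,a^{\lambda-1} \le C_\lambda\bigl(P_{n-1}^{\lambda-1}-P_n^{\lambda-1}\bigr)/P_{n-1}^{\lambda-1}\cdot P_{n-1}^{\lambda-1}$ — i.e. reduce to showing $f(n)\le C_\lambda(1-(1-f(n))^{\lambda-1})$ uniformly over $f(n)\in[0,1)$. For $\lambda>1$ the map $t\mapsto t/(1-(1-t)^{\lambda-1})$ is continuous on $(0,1]$ and has a finite limit $1/(\lambda-1)$ as $t\to0^+$ and value $1$ at $t=1$; hence it is bounded by some $C_\lambda<\infty$. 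This is the heart of the argument and where the dependence on $\lambda$ (blowing up as $\lambda\downarrow1$) enters.

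\textbf{Step 2.} Combine: the $n$-th summand ($n\ge1$) satisfies
\[
f(n)\prod_{i=1}^n(1-f(i))^{\lambda-1} = f(n)\,(1-f(n))^{\lambda-1}P_{n-1}^{\lambda-1}\cdot P_{n-1}^{\lambda-1}\big/P_{n-1}^{\lambda-1},
\]
which I rewrite using Step 1 as $\le C_\lambda\bigl(P_{n-1}^{\lambda-1}-P_n^{\lambda-1}\bigr)$, because $f(n)P_{n-1}^{\lambda-1}\le C_\lambda(1-(1-f(n))^{\lambda-1})P_{n-1}^{\lambda-1} = C_\lambda(P_{n-1}^{\lambda-1}-P_n^{\lambda-1})$. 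Then
\[
\sum_{n=1}^\infty f(n)\prod_{i=1}^n(1-f(i))^{\lambda-1}\le C_\lambda\sum_{n=1}^\infty\bigl(P_{n-1}^{\lambda-1}-P_n^{\lambda-1}\bigr)\le C_\lambda P_0^{\lambda-1}=C_\lambda,
\]
a telescoping sum since $P_n^{\lambda-1}$ is nonincreasing and nonnegative. Adding the $n=0$ term (equal to $f(0)=1$) gives the bound $1+C_\lambda$, so one may take the constant in \eqref{claim} to be $C_\lambda+1$ (or absorb it, renaming $C_\lambda$).

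\textbf{Main obstacle.} The only real content is the uniform bound in Step 1; everything else is a telescoping observation. The subtlety is making sure the constant $C_\lambda$ is genuinely independent of $f$ — this follows from compactness of $[0,1]$ once one checks the boundary behaviour of $t\mapsto t/(1-(1-t)^{\lambda-1})$ at $t=0$ (finite limit $(\lambda-1)^{-1}$) — and handling the degenerate case $f(i)=1$ for some $i$, which is most cleanly dispatched by noting that then $P_n=0$ for $n\ge i$ so the tail vanishes and Step 1 is applied only for indices with $f(n)<1$. No further estimates are needed.
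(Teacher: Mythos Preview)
Your argument is correct and is at heart the same telescoping idea as the paper's, but carried out more directly. The paper first applies $(1-f(i))^{\lambda-1}\le e^{-(\lambda-1)f(i)}$ and then telescopes the sequence $e^{-(\lambda-1)\sum_{i\le n}f(i)}$, using $1-e^{-x}\ge x/3$ on $[0,1]$ to compare its increments with the terms $f(n)$; this produces $C_\lambda=1+3/(\lambda-1)$. You bypass the exponential step and telescope $P_n^{\lambda-1}$ itself, the comparison of increments with $f(n)$ being exactly your Step~1 bound $t\le C_\lambda\bigl(1-(1-t)^{\lambda-1}\bigr)$. Since $\sup_{t\in(0,1]} t/\bigl(1-(1-t)^{\lambda-1}\bigr)=\max\{1,1/(\lambda-1)\}$, your route actually yields the sharper constant $1+\max\{1,1/(\lambda-1)\}$. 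Two small clean-ups: the $n$-th summand is $f(n)P_n^{\lambda-1}$, not $f(n)P_{n-1}^{\lambda-1}$, so in Step~2 you are implicitly using $P_n^{\lambda-1}\le P_{n-1}^{\lambda-1}$ before applying Step~1---make that explicit; and the case $f(i)=1$ needs no separate treatment, since your Step~1 inequality holds at $t=1$ (both sides equal $1\le C_\lambda$) and the telescoping remains valid with $P_n=0$ for $n\ge i$.
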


\begin{proof}
First notice that, for any $n\ge0$,
\begin{align}\label{step1}
f(n)\prod_{i=1}^n\left(1-f(i)\right)^{\lambda-1}\le f(n){\rm e}^{-(\lambda-1)\sum_{i=0}^nf(i)}.
\end{align}
 For any $n\ge0$, we have that
\begin{align*}
&\exp\{-(\lambda-1)\sum_{i=0}^nf(i)\}-\exp\{-(\lambda-1)\sum_{i=0}^{n+1}f(i)\}\\
&=\exp\{-(\lambda-1)\sum_{i=0}^nf(i)\}\left(1-\exp\{-(\lambda-1)f(n+1)\}\right)\\
&\ge \frac{\lambda-1}{3}f(n+1)\exp\{-(\lambda-1)\sum_{i=0}^nf(i)\}\\
&\ge \frac{\lambda-1}{3}f(n+1)\exp\{-(\lambda-1)\sum_{i=0}^{n+1}f(i)\},
\end{align*}
where we have used that $1-{\rm e}^{-x}\ge x/3$ for any $x\in[0,1]$. Together with \eqref{step1}, this implies that
\begin{align*}
\sum_{n=0}^\infty f(n)\prod_{i=1}^n\left(1-f(i)\right)^{\lambda-1}&\le f(0)+\frac{3}{\lambda-1}\sum_{n=0}^\infty \left(e^{-(\lambda-1)\sum_{i=0}^nf(i)}-e^{-(\lambda-1)\sum_{i=0}^{n+1}f(i)}\right)\\
&\le1+ \frac{3}{\lambda-1} \left(e^{-(\lambda-1)}-{\rm e}^{-(\lambda-1)\sum_{i=0}^\infty f(i)}\right).
\end{align*}
This easily implies \eqref{claim} with 
$$C_\lambda= 1+ \frac{3}{\lambda-1}  = \frac{\lambda+2}{\lambda-1}.$$ 
\hfill
\end{proof}

The following result concludes the proof.

\begin{proposition}\label{proptrans}
If $RT(\mathcal{G},\X)>1$ and if \eqref{maincond} is satisfied  then  ${\bf X}$ is transient.
\end{proposition}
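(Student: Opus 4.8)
The plan is to deduce transience from Lemma~\ref{simple}, i.e.~to exhibit a unit flow of finite energy on $\mathcal{G}$ equipped with the conductances $c(e)=\Psi(e)/(1-\psi(e))$ from Definition~\ref{defmodcond1}, and the tool for producing such a flow is Proposition~\ref{propLyons}. Since $RT(\mathcal{G},\X)>1$, by definition~\eqref{defBR} there exists $\lambda\in(1,RT(\mathcal{G},\X))$ and a constant $c>0$ with $\inf_{\pi\in\Pi}\sum_{e\in\pi}(\Psi(e))^{\lambda}>c$. The idea is to feed Proposition~\ref{propLyons} with a weight $u_e$ chosen so that (i) $u_e c(e)$ is comparable to $(\Psi(e))^{\lambda}$, so that \eqref{condcor} follows from the $RT>1$ hypothesis, and (ii) the cumulative sums $\sum_{g\le e}u_g$ along any ray stay bounded, so that the resulting flow has finite energy. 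The natural choice is
\[
u_e = \frac{(\Psi(e))^{\lambda}}{c(e)} = (1-\psi(e))\,(\Psi(e))^{\lambda-1},
\]
after normalizing so that $\sum_{e:|e|=1}u_e=1$; note $\psi(e)=1$ when $|e|=1$ needs a separate harmless treatment of the first generation (there $c(e)=1$, so just set $u_e$ directly).

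With this choice, $u_ec(e)=(\Psi(e))^{\lambda}$ up to the normalization constant, so \eqref{condcor} holds and Proposition~\ref{propLyons} yields a nonzero flow with energy at most $\lim_{n\to\infty}\max_{|e|=n}\sum_{g\le e}u_g$. The crux is then to bound $\sum_{g\le e}u_g$ uniformly in $e$. Fix a ray and write $\xi_0,\xi_1,\dots$ for its edges; along it $\Psi(\xi_n)=\prod_{i\le n}\psi(\xi_i)$, so writing $f(i):=1-\psi(\xi_i)\in[0,1]$ (with $f(0)=0$, or rather handling generation $1$ separately so that effectively $f(0)=1$ as in Proposition~\ref{propfunc}) we get
\[
\sum_{g\le \xi_n} u_g \;=\; \sum_{k=1}^{n} (1-\psi(\xi_k))\,(\Psi(\xi_k))^{\lambda-1}
\;=\;\sum_{k=1}^n f(k)\prod_{i=1}^k (1-f(i))^{\lambda-1},
\]
which is exactly the series controlled by Proposition~\ref{propfunc}: it is bounded by the absolute constant $C_\lambda=(\lambda+2)/(\lambda-1)$, uniformly over the ray and over $n$. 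Hence the flow has energy at most (a constant times) $C_\lambda<\infty$, and after rescaling to a unit flow the energy is still finite. Lemma~\ref{simple} then gives transience of $\X$; alternatively one invokes Proposition~\ref{proplowerboundperco} directly via $\mathcal{C}_{\rm eff}>0$, which uses \eqref{maincond} through Lemma~\ref{lemmaquasi}.

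I expect the main obstacle to be purely bookkeeping rather than conceptual: carefully matching the telescoping structure of $\Psi$ along a ray to the hypothesis of Proposition~\ref{propfunc} (in particular the role of the first generation, where $\psi\equiv1$ and $c\equiv1$ break the uniform pattern), and checking that the comparison $u_ec(e)\asymp(\Psi(e))^{\lambda}$ together with the normalization does not spoil \eqref{condcor}. One must also make sure that $RT(\mathcal{G},\X)>1$ genuinely gives a $\lambda>1$ strictly below it with the infimum positive — immediate from the supremum definition~\eqref{defBR}. Everything else — max-flow min-cut via Proposition~\ref{propLyons}, the energy bound via Proposition~\ref{propfunc}, and the passage from finite-energy flow to transience via Lemma~\ref{simple} and Proposition~\ref{proplowerboundperco} — is already in place in the excerpt, so the proof should be short.
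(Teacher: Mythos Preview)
Your proposal is correct and follows essentially the same route as the paper: the paper also fixes $\lambda\in(1,RT(\mathcal{G},\X))$, sets $u_e=(1-\psi(e))\prod_{g\le e}(\psi(g))^{\lambda-1}=(1-\psi(e))(\Psi(e))^{\lambda-1}$ (with $u_e=1$ at generation~$1$), checks $u_ec(e)=(\Psi(e))^{\lambda}$ so that \eqref{condcor} holds, bounds $\sum_{g\le e}u_g\le C_\lambda$ via Proposition~\ref{propfunc}, and concludes through Proposition~\ref{propLyons} and Lemma~\ref{simple}. Your anticipated obstacles are exactly the minor bookkeeping points the paper handles in the same way.
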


\begin{proof}
Fix a real number $\lambda\in \left(1,RT({\mathcal{G}},\X)\right)$ and, for any edge $e\in E$, let us define $u_{e}=1$ if $|e|=1$ and, if $|e|>1$, 
\[
u_e=\left(1-\psi(e)\right)\prod_{g\le e}\left(\psi(g)\right)^{\lambda-1}.
\]
On one hand, we have that, for any $e\in E$,
\begin{align}\label{presque}
\sum_{g\le e}u_{g}\le C_\lambda,
\end{align}
as can be seen by applying Proposition \eqref{propfunc} to functions $f_e$ defined by $f_e(0)=1$ and, for $n\ge1$,  $f_e(n)=1-\psi(g)$ with $g$ the unique edge such that $g\le e$ and $|g|=n\wedge |e|$. We emphasize that \eqref{presque} holds with a uniform bound.\\
On the other hand, using \eqref{modcond}, we have
\begin{align*}
\inf_{\pi\in \Pi}\sum_{e\in\pi} u_{e}c(e) &= \inf_{\pi\in \Pi}\sum_{e\in\pi} \left(\left(1-\psi(e)\right)\left(\Psi(e)\right)^{\lambda-1}\right)\times\frac{\Psi(e)}{1-\psi(e)}\\
&= \inf_{\pi \in \Pi }\sum_{e\in\pi} \left(\Psi(e)\right)^{\lambda}>0.
\end{align*}
Proposition \ref{propLyons} and \eqref{presque} imply that there exists a nonzero flow $(\theta_e)$ whose energy is bounded as
\begin{align*}
\sum_{e\in E}\frac{\left(\theta_{e}\right)^2}{c(e)}\le \lim_{n\to\infty}\max_{e \in E: |e|=n}\sum_{g\le e} u_{g}\le C_\lambda.
\end{align*}
Therefore there exists a unit flow with finite energy and Lemma \ref{simple} implies that  ${\bf X}$ is transient.
\hfill
\end{proof}

\begin{remark}\label{remperco}
Let us emphasize that any independent percolation is quasi-independent. Besides, we can apply  Proposition~\ref{proplowerboundperco} (or alternatively Theorem 5.14  in \cite{LP}) to the independent percolation on $\mathcal{G}$ for which an edge $e\in E$ is open with probability $\psi(e)$. The proof presented in this Section implies that the cluster of the root in this percolation is infinite with positive probability when $RT(\mathcal{G},\X)>1$.\\
Besides, proceeding as in the proof of Proposition \ref{proprec}, one can prove that the cluster of the root in this percolation is a.s.~finite when $RT(\mathcal{G},\X)<1$.\\
Finally, recall that, in the proof of Theorem \ref{corpoly}, we proved that if $\X$ is \Or, then $RT(\mathcal{G},\X)=br_r(\mathcal{G})/\delta$. Hence, the independent percolation in which an edge at level $n+1$ is open with probability $1-\delta/(n+\delta)$ is subcritical if $\delta>br_r(\mathcal{G})$ and supercritical if $\delta<br_r(\mathcal{G})$.
\end{remark}
\section{Critical \Or: proof of Proposition \ref{propcrit}} \label{sectcrit}

Here we  prove Proposition \ref{propcrit} which partially describe the behavior of the \Or~at criticality. In particular, in the following proof, we work with a tree such that $br_r(\mathcal{G})\in(0,\infty)$ and study the \Or~with parameter $\delta_c=br_r(\mathcal{G})$, that is a \Go~with $w_e=1$ and $\delta_e=\delta_c$ for any edge $e\in E$.

\begin{proof}[Proof of Proposition \ref{propcrit}]
The first part about recurrence is in fact a direct consequence of Proposition \ref{proprec}.\\
To prove transience, one has to reproduce the proof of Section \ref{sec:transience} and prove that the effective conductance $\C_{\rm eff}$ of the tree is positive when an edge $e$ is assigned the conductance  specified in \eqref{modcond}, see Proposition \ref{proplowerboundperco}. In the case of \Or, we have  that $c(e)\sim |e|^{1-\delta_c}$, using the fact that $1-\psi(e)= \delta_c/(\delta_c+ |e|-1) \sim \delta_c |e|^{-1}$, for $|e| \ge 2$,  and $\Psi(e)\sim n^{-\delta_c}$.\\
Now, recall that, by assumption, there exists a positive function $f$ such that
\[
\inf_{\pi \in\Pi} \sum_{e\in \pi}\frac{1}{ |e|^{\delta_c}f(|e|)}>0\text{ and } \sum_{n\ge1}\frac{1}{nf(n)}<\infty.
\]
Therefore, we can use Proposition \ref{propLyons} with $u_e=1/(|e|f(|e|))$ and conclude that there exists a nonzero flow with finite energy and thus, by Lemma \ref{simple}, that $\C_{\rm eff}>0$.
\hfill
\end{proof}


 \section{A 0-1 law for recurrence and transience} \label{dichotomy}
 
We prove that  recurrence and transience  for the \Go~satisfy a 0-1 law. 

\begin{proposition}
Let $\X$ be a \Go. 
The event that every vertex (or some vertex) is visited infinitely often happens with probability $0$ or $1$. In particular, this implies that $\X$ is transient if and only if every vertex is visited finitely often, and $\X$ is recurrent if and only if every vertex  is visited infinitely often.
\end{proposition}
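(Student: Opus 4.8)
The plan is to establish the 0-1 law by exploiting the tree structure together with the extension machinery from Section \ref{sec:ext}, reducing the statement to a statement about the root. First I would argue that, by conditioning on the first step and using the independence of the extensions on disjoint subtrees, the event $\{T(\r)=\infty\}$ (i.e.~$\X$ escapes to infinity) has probability either $0$ or is witnessed with positive probability, and I want to upgrade "positive probability of escape" to "escapes almost surely conditioned on not returning infinitely often to ... " — more precisely, to show the relevant event is trivial. The clean way is this: for a vertex $v$, let $E_v$ be the event that $v$ is visited infinitely often. The key observation is that, on a tree, if some vertex $v$ with $v\neq\r$ is visited infinitely often, then its parent $\parent v$ is visited infinitely often as well (every visit to $v$ after time $0$ is preceded by a step from $\parent v$), and iterating, $\r$ is visited infinitely often. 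Conversely, if $\r$ is visited infinitely often then $\X$ is recurrent, hence (by the definition of recurrence and a symmetry/renewal argument at successive returns to $\r$) every vertex that is visited at all is visited infinitely often, and every vertex is visited (from $\r$, each neighbour is reached with positive conditional probability at each return, and the returns are infinite in number — though the walk is not Markov, the conditional probability of stepping to a given neighbour of $\r$ is bounded below by a positive constant depending only on the weights incident to $\r$, so Lévy's conditional Borel–Cantelli applies). Thus $\{$some vertex visited i.o.$\}=\{\r$ visited i.o.$\}=\{$every vertex visited i.o.$\}=\{T(\r)=\infty\}^c\cup\{$return to $\r$ infinitely often$\}$, and these coincide up to null sets, so it suffices to prove that $\{\r$ visited infinitely often$\}$ has probability $0$ or $1$.

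Next I would prove this dichotomy for $\r$. Let $p=\bP(\r$ visited infinitely often$)=\bP(T(\r)=\infty$ or return to $\r$ i.o.$)$; by the previous paragraph this equals $\bP(\X$ does not eventually get absorbed in a finite set$)$, and I claim it equals $\bP(T(\r)<\infty$ and, after the first return, $\r$ visited i.o.$)+\bP(T(\r)=\infty)$. The cleanest route is to show $\bP(T(\r)<\infty)\in\{0,1\}$ is \emph{not} what we need; rather, condition on $\{T(\r)<\infty\}$ and observe that at the return time the walk is at $\r$ but the weights of the already-crossed edges have been switched to their reinforced values once and for all, so the future evolution is again a \Go~on the same tree but with \emph{all} initial weights $w_e$ replaced by reinforced weights $\delta_e$ on the (finite, random) set of crossed edges. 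The number of crossed edges can only increase, and after each return to $\r$ the set of crossed edges incident to $\r$ stabilises after finitely many returns; once it has stabilised, the conditional law of the next excursion from $\r$ is a fixed law. Hence, on $\{\r$ visited i.o.$\}$, the excursions from $\r$ are eventually i.i.d.-like enough that a Borel–Cantelli/renewal argument gives that either $\r$ is visited i.o.~with conditional probability $1$ on this event — i.e.~$\{\r$ visited i.o.$\}$ is (up to null sets) invariant and its probability is $0$ or $1$. More carefully: let $q$ be the limiting conditional probability that an excursion from $\r$ returns to $\r$, computed with the fully-reinforced weights on the stabilised crossed-edge set; if $q<1$ the walk a.s.~eventually escapes, if $q=1$ it a.s.~returns i.o.; in either case the event is trivial. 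The technical point to nail down is that this "eventual stabilisation" is legitimate since only finitely many edges are ever crossed on the event that the walk stays in a finite set, and on its complement $\r$ is automatically visited i.o.

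The main obstacle I anticipate is the non-Markovian nature of the walk: one cannot simply say "restart at $\r$" because the edge weights carry memory. The device that resolves this is exactly Rubin's construction from Section \ref{sec:ext}: the clocks ${\bf Y}$ are i.i.d.~and the whole trajectory is a deterministic function of them, and on the event that only finitely many edges are ever crossed, only finitely many clocks are ever "consumed" past their initial segment, so one can set up a genuine renewal structure at the successive returns to $\r$ after the crossed-edge configuration near $\r$ has frozen. I would phrase the argument so that $\{\r$ visited i.o.$\}$ is shown to be measurable with respect to the tail $\sigma$-field of an i.i.d.~sequence obtained by listing, excursion by excursion after freezing, the relevant independent blocks of clocks, and then invoke Kolmogorov's 0-1 law. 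Alternatively, and perhaps more cleanly, I would use Lévy's conditional form of Borel–Cantelli: letting $R_k$ be the $k$-th return time to $\r$ (with $R_k=\infty$ if there are fewer than $k$ returns), the event $\{R_{k+1}<\infty\}$ has conditional probability given $\mathcal F_{R_k}$ bounded below by a positive constant on $\{R_k<\infty\}$ precisely when the walk has not escaped, whence $\sum_k \bP(R_{k+1}<\infty\mid\mathcal F_{R_k})=\infty$ on $\{\r$ visited i.o.$\}$, and comparing with $\sum_k\1_{R_{k+1}<\infty}$ forces triviality. Closing the gap between "bounded below by a positive constant" and the needed summability is the one place requiring care, and it is handled by the observation above that the crossed-edge set stabilises.
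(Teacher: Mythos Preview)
Your reduction to the root is essentially correct in spirit (and matches the paper's first paragraph), though your parenthetical justification ``every visit to $v$ after time $0$ is preceded by a step from $\parent v$'' is false: visits to $v$ can come from any neighbour, including children. The correct argument (which you in fact gesture at elsewhere) is that if $v$ is visited infinitely often but $\parent v$ only finitely often, then eventually the walk is trapped in the subtree below $v$; there, the conditional probability at $v$ of stepping to $\parent v$ is bounded below, so L\'evy's conditional Borel--Cantelli gives a contradiction.

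The real gap is in the second half. Your renewal/stabilisation argument breaks in two places. First, even after every edge incident to $\r$ has been crossed, the law of the next excursion from $\r$ is \emph{not} fixed: it depends on the entire set of previously crossed edges, which can grow without bound (the walk can return to $\r$ infinitely often while exploring more and more of the tree --- think of simple random walk on $\Z_+$). So there is no well-defined ``limiting return probability $q$'', and the excursions never become i.i.d. Second, your patch via the dichotomy ``finite range / infinite range'' contains a logical slip: you write ``on its complement $\r$ is automatically visited i.o.'', but the complement of $\{\text{finite range}\}$ contains every transient trajectory, which visits $\r$ only finitely often. The L\'evy--Borel--Cantelli route would need a uniform (in $k$ and in the history) positive lower bound on the conditional escape probability at each return, and you never establish one; without it the conditional return probabilities could drift to $1$ and the argument yields nothing. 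Likewise, organising clocks ``excursion by excursion'' does not produce a tail $\sigma$-field of an i.i.d.\ sequence, because which clocks belong to which excursion is itself random.

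The paper avoids all of this by a genuinely different device: rather than proving $\{\r\text{ visited i.o.}\}$ is a tail event directly, it introduces an auxiliary event
\[
B=\Bigl\{\text{infinitely many }v\in V\setminus\{\r\}\text{ satisfy }\bigl|\{k\ge0:X^{\mathcal T_v}_k=\parent v\}\bigr|<\infty\Bigr\},
\]
defined purely in terms of the extensions $\X^{\mathcal T_v}$ on the subtrees $\mathcal T_v$, and shows $\{T(\r)=\infty\}\subset B\subset\{\text{every vertex visited finitely often}\}$ almost surely. The point is that for $|v|>n$ the extension $\X^{\mathcal T_v}$ uses only clocks attached to edges at generation $>n$, so $B$ lies in the tail $\sigma$-field of the clocks \emph{ordered by generation}, and Kolmogorov's $0$--$1$ law applies. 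The missing idea in your proposal is precisely this spatial (rather than temporal) organisation of the clocks via the extensions.
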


\begin{proof}
First, regardless of the current states of the weights and because $\delta_e,w_e\in(0,\infty)$ for any edge $e\in E$, the walk $\X$ goes from one given vertex to another one with a probability lower-bounded by a positive constant  (depending on the choice of the two vertices). Therefore, $\X$ visits one vertex finitely (resp.~infinitely) often if and only if it visits every vertex finitely (resp.~infinitely) often.\\

For any vertex $v\in V\setminus \{\r\}$, let $\mathcal{T}_{v}$ be the subtree consisting of $v^{-1}$, $v$ and all the descendants of $v$. For any $v\in V$, denote $\X^{\mathcal{T}_{v}}$ the extension of $\X$ on the subtree $\mathcal{T}_{v}$, as defined in Section \ref{sec:ext}. Consider the event
\begin{align*}
B
&=\left\{\left|\left\{v\in V\setminus\{\r\}:\left|\{k\ge 0:  X^{\mathcal{T}_{v}}_k=v^{-1}\}\right|<\infty\right\}\right|=\infty\right\}.
\end{align*}
Note that the event $B$ deals with the extensions and not the process ${bf X}$ itself.
We have that, almost surely,
\[
\{T(\r)=\infty\}\subset B \subset\{\X \text{ visits every vertex finitely often}\}.
\]
Indeed, to prove the first inclusion, note that if $T(\r)=\infty$ then infinitely many vertices are ancestors of $X_n$ as soon  as $n$ is large enough. Let us give a short argument to prove the second inclusion. Assume that $\X$ visits one vertex infinitely often and that $B$ holds. Then, $\X$ visits every vertex infinitely often. Besides, as $B$ holds, there exists  a vertex $v$ such that $\left|\{k\ge 0:  X^{\mathcal{T}_{v}}_k=v^{-1}\}\right|=n$, for some finite integer $n$. In this case, if $\X$ visits all the vertices infinitely often, it will eventually jump from $v$ to $\parent v$ for the $n$-th time and come back to $v$. After this time, $\X$ cannot visit $\parent v$ again  and thus it never returns the root, which yields a contradiction.\\
Recall Rubin's construction and the extensions defined in Section  \ref{sec:ext}. In particular, the construction of $\X$ involves a collection of independent and identically distributed (i.i.d.) exponential random variables
\[{\bf Y}=(Y(\nu,\mu,k): (\nu,\mu)\in V^2, \mbox{with }\nu \sim \mu, \textrm{ and }k \in \N).\]
Let us pick these random variables from a given  i.i.d.~collection $(Y_i)_{i\ge0}$, ordered in an arbitrary manner, in the sense that we fix a bijection $f:\mathbb{N}\to\{(\nu,\mu,k):\ \nu,\mu\in V, \nu\sim \mu\text{ and } k\in\mathbb{N}\}$. 

For any $i\in \mathbb{N}$, if $f(i)=(\nu,\mu,j)$, with $\nu,\mu\in V$, $\nu\sim \mu$, $j\in\mathbb{N}$, then we define $||f(i)||=|\nu|$.\\
We claim that the event $B$ is a tail event for the $\sigma$-algebra generated by the sequence $(Y_i)$. First, for any $n\in \mathbb{N}$, we have
\begin{align}\label{ahhhh}
B =\left\{\left|\left\{v\in V:\ |v|>n, \ \left|\{k\ge 0:  X^{\mathcal{T}_{v}}_k=v^{-1}\}\right|<\infty\right\}\right|=\infty\right\}.
\end{align}
Second, for any $n\in\mathbb{N}$, define
\begin{align*}
k(n)&=\max\{i\ge0:\  ||f(i)||\le n\}.
\end{align*}
In words, $k(n)$ is the greatest index such that all the random variables $Y_i$, $i\le k(n)$, are all assigned to vertices at generation less than $n$. In particular, any step performed by $\X$ (or its extensions) from a vertex at generation strictly greater than $n$ does not depend on $Y_i$, $i\le k(n)$. It is straightforward to see that $k(n)$ goes to infinity as $n$ goes to infinity and that $f$ can easily be chosen such that $k(n)$ is finite for every $n$. Indeed, for instance, start by attributing random variables $Y$'s for the first crossing of oriented edges at generation $1$; then assume that for any $i\le n$, oriented edges at generation $i$ have been attributed random variables $Y$'s for their first $n+1-i$ crossings; finally, for any $i\le n+1$, attribute random variables $Y$ to oriented edges at generation $i$ for their first $(n+2-i)$-th crossing.\\
For any $v\in V$ with $|v|>n$, the event $\left\{\left|\{k\ge 0:  X^{\mathcal{T}_{v}}_k=v^{-1}\}\right|<\infty\right\}$ clearly does not depend on the steps of $\X$ performed from vertices at generation less than $n$.
Then, using \eqref{ahhhh}, we obtain that the event $B$ is measurable with respect to $\sigma\left(Y_i,i\ge k(n)\right)$, for any $n\in\mathbb{N}$.\\
Finally, using Kolmogorov's $0$-$1$ law, we obtain that ${\bf P}(B)\in \{0,1\}$.\\

To conclude, note that, on $B^c$, for any vertex $v\in V$ except (at most) a finite number of them, $\X$ jumps from $v$ to $v^{-1}$ infinitely often and thus every vertex, is visited infinitely often.
\hfill
\end{proof}

\begin{ack}
The authors are grateful to Russell Lyons for precious comments on an earlier version of this work.
\end{ack}

\end{document}